\renewcommand\eqref[1]{(\ref{#1})} 
\newcommand{\R}{\mathbb R}
\title[Grand Lorentz Spaces]{Convolution-type operators in grand Lorentz spaces}
\dedicatory{Dedicated to Professor Lars-Erik Persson on  occasion of his 80th birthday}
\author[E. D. Nursultanov]{Erlan D. Nursultanov}
\address{
	Erlan D. Nursultanov:
	\endgraf
          Department of Mathematics and Informatics
	\endgraf
	Lomonosov Moscow State University, Kazakhstan Branch
	\endgraf
	and
	\endgraf
	Institute of Mathematics and Mathematical Modelling, Almaty 
		\endgraf
	Kazakhstan
	\endgraf
	{\it E-mail address} {\rm er-nurs@yandex.kz}}
\author[H. Rafeiro]{Humberto Rafeiro}
\address{
	Humberto Rafeiro:
	\endgraf
	Department of Mathematical Sciences, College of Sciences
	\endgraf
 United Arab Emirates University
	\endgraf
	Al Ain, PO Box 15551
	\endgraf
	United Arab Emirates
	\endgraf
	{\it E-mail address} {\rm rafeiro@uaeu.ac.ae (corresponding author)}}
\author[D. Suragan]{Durvudkhan Suragan}
\address{
	Durvudkhan Suragan:
	\endgraf
	Department of Mathematics
	\endgraf
 Nazarbayev University
	\endgraf
	53 Kabanbay Batyr Ave, Astana 010000
	\endgraf
	Kazakhstan
	\endgraf
	{\it E-mail address} {\rm durvudkhan.suragan@nu.edu.kz}}
\subjclass[2010]{46E30, 46B70, 26D15}
\keywords{Grand Lorentz space, Hardy-Littlewood-Sobolev inequality, Interpolation theorem, Dual Space}
\thanks{Humberto Rafeiro thanks Nazarbayev University for hosting a two-week research visit in April 2024, made possible by the support of the AUA Scholars Award Program 2023–-2024.}
\newtheoremstyle{theorem}
{10pt}          
{10pt}  
{\sl}  
{\parindent}     
{\bf}  
{. }    
{ }    
{}     
\theoremstyle{theorem}
\numberwithin{equation}{section}
\theoremstyle{plain}
\newtheorem{thm}{Theorem}[section]
\newtheorem{cor}[thm]{Corollary}
\newtheorem{lem}[thm]{Lemma}
\theoremstyle{definition}
\newtheorem{rem}[thm]{Remark}
\newtheorem{ex}[thm]{Example}
\newtheoremstyle{defi}
{10pt}          
{10pt}  
{\rm}  
{\parindent}     
{\bf}  
{. }    
{ }    
{}     
\theoremstyle{defi}
\newtheorem{definition}[thm]{Definition}
\newtheorem{remark}[thm]{Remark}
\begin{document}

		\begin{abstract}

 We introduce and study a novel grand Lorentz space---that we believe is appropriate for critical cases---that lies ``between'' the Lorentz--Karamata space and the recently defined grand Lorentz space from \cite{AFH2020}. 
	We prove both Young’s and O'Neil’s inequalities in the newly introduced grand Lorentz spaces, which allows us to derive a Hardy--Littlewood--Sobolev-type inequality. We also discuss K\"othe duality for grand Lorentz spaces, from which we obtain a new K\"othe dual space theorem in grand Lebesgue spaces.   
	\end{abstract}
	\maketitle
	
\section{Introduction}
In the last two decades, the theory of grand Lebesgue spaces, denoted by $L^{p),\theta}(\Omega)$ and defined as the set of all $f \in L^0(\Omega)$---the space of all real-valued measurable functions on $\Omega$---such that  
\begin{equation}\label{eq:IwaniecSbordone}
    \|f\|_{L^{p),\theta}(\Omega)} := \sup_{0< \varepsilon <p-1}  \varepsilon^{\theta} \|f\|_{L^{p-\varepsilon}(\Omega)} < \infty, \quad |\Omega| < \infty,
\end{equation}is one of the intensively developing directions in modern analysis. The notion of grand Lebesgue spaces $L^{p)} (\Omega):= L^{p),1}(\Omega)$ was introduced in 1992 by Iwaniec and Sbordone \cite{Iwaniec1992}  to deal with the problem of the integrability of the Jacobian under minimal hypothesis. 
An alternative characterisation, given in \cite{Fiorenza2004,Fiorenza2018},  for grand Lebesgue spaces 
 $L^{p),\theta}(\Omega)$ with $|\Omega|=1$ is also known, viz.  
 \begin{equation}\label{eq:grandlebesgueviarearrangement}
\|f\|_{L^{p),\theta} (\Omega)}
\approx \sup_{0<s<1}(1-\ln s)^{-\frac{\theta}{p}}\left(\int_s^1\left(f^*(t)\right)^p \frac{dt}t\right)^{\frac{1}{p}}.
 \end{equation} 
The notion of small Lebesgue spaces $L^{(p,\theta}(\Omega)$, which are a useful companion to $L^{p),\theta}(\Omega)$ as shown in \cite{Fiorenza2000}, can be characterised in terms similar to \eqref{eq:grandlebesgueviarearrangement}, namely
\begin{equation*}
\|f\|_{L^{(p',\theta} (\Omega)}
\approx \int_0^1 (1-\ln s)^{-\frac{\theta}{p'} + \theta-1}\left(\int_0^s\left(f^*(t)\right)^{p'} \frac{dt}t\right)^{\frac{1}{p'}}\frac{ds}{s},    
\end{equation*}
as done in \cite{Fiorenza2018}.

  The spaces $L^{p),\theta}(\Omega)$ have been used in some problems related to PDEs; see, e.g., \cite{Iwaniec1994, Iwaniec1998, Sbordone1996, Sbordone1998}, among others. It is worth pointing out that grand spaces are the appropriate ambient space in which some non-linear equations have to be considered; see \cite{Fiorenza1998,Greco1997}. The theory of operators in grand Lebesgue spaces has been extensively studied in recent years; many of these results are compiled in the monographs \cite{KMRS2016, KMRS2024} dedicated to this subject. 
  The idea of aggrandisation can be applied to other function spaces. For example, the notion of grand Morrey spaces, with only one aggrandised parameter, goes back to \cite{Meskhi1,Meskhi2}, while the grand Morrey space with both aggrandised parameters appeared in \cite{rafeiro1}. For further aggrandised function spaces, see \cite{KMRS2016, KMRS2024} and references therein. We also refer to \cite{NurSur} for boundedness results for convolution-type operators in generalized Morrey spaces.

The Lorentz spaces $L_{p,q}$, introduced in the 1950s by G. Lorentz, are well known, and their importance stems, among other reasons,  from being a proper substitute for Lebesgue spaces in some endpoint estimates in interpolation theory. For a classical reference, we refer to \cite{bennett1988interpolation}, while for a more contemporary source, see \cite{Castillo2021}.

In this paper, we introduce the notion of grand Lorentz spaces $GL^\theta_{p,q}(\Omega)$, see \eqref{eq:2.1} and \eqref{eq:2.2}, where the aggrandisation is applied to the principal index $p$, 
which differs from the approach used in \cite{AFH2020}. A key property of the newly introduced grand Lorentz spaces is that classical Lorentz spaces are nested in between grand Lorentz spaces with positive and negative $\theta$, see
\eqref{eq:nestingpropertiesogGrandLorentzspace}, which allows for a more refined control in critical cases.

This paper is organised as follows: in Section \ref{sec:2}, we introduce grand Lorentz spaces and derive some of its elementary properties and embeddings. A comparison with the Lorentz--Karamata space and another version of the grand Lorentz space is also provided; see Lemma \ref{lema:2.3}.  Section \ref{sec:3} is devoted to the study of the O'neil inequality with an application to the boundedness of a modified fractional integral operator; see Theorem \ref{Hardy-Littlewood-Sobolev-type}. The Section \ref{sec:4} deals with Young's inequality while Section \ref{sec:5} examines briefly the topic of interpolation in grand Lorentz spaces. In Section \ref{sec:6} we study K\"othe dual spaces, also know as associate spaces, for grand Lorentz and grand Lebesgue spaces.    

\medskip

We denote \( A \lesssim B \) to indicate that \( A \leq C B \) for some constant \( C > 0 \), which remains independent of the primary parameters under consideration. Furthermore, we use \( A \asymp B \) to express that both \( A \lesssim B \) and \( B \lesssim A \) hold true.

	\section{Grand Lorentz spaces} \label{sec:2}


In what follows, by $\Omega$ we denote a finite measurable subset of $\mathbb R^n$, i.e. $\Omega \subset \mathbb{R}^n$ and $|\Omega|<\infty$. This will be tacitly assumed throughout the entire paper, without further mention.

\begin{definition}\label{def:1}
Let $\theta \in \mathbb{R},\; 0<p\leqslant \infty,$  and $0<q \leqslant \infty$.  The \emph{grand Lorentz space} $GL_{p,q}^\theta(\Omega)$ is the set of measurable functions $f$ for which the following quasinorms are finite
\begin{equation}\label{eq:2.1} 
    \|f\|_{G L_{p,q}^\theta(\Omega)}=
\begin{cases}
\displaystyle \sup _{0<\varepsilon < 1} \varepsilon^\theta \left(\int_0^1\left(t^{\frac{1}{p}+\varepsilon} f^*(t|\Omega|)\right)^q \frac{d t}{t}\right)^{\frac{1}{q}},&  \text{for}\; \theta\geqslant 0,  \\
\displaystyle \sup _{0<\varepsilon < 1} \varepsilon^\theta \left(\int_0^1\left(t^{\varepsilon} f^*(t|\Omega|)\right)^q \frac{d t}{t}\right)^{\frac{1}{q}},&  \text{for}\; \theta > 0 \;\text{and} \;  p=\infty, \\
\displaystyle \inf _{0<\varepsilon<\frac{1}{p}} \varepsilon^\theta \left(\int_0^1\left(t^{\frac{1}{p}-\varepsilon} f^*(t|\Omega|)\right)^q \frac{d t}{t}\right)^{\frac{1}{q}}, & \text{for}\; \theta< 0,
\end{cases}
\end{equation}
if $0<q<\infty$, and
\begin{equation}\label{eq:2.2}
 \|f\|_{G L_{p,\infty}^\theta(\Omega)}=\left\{\begin{array}{l}
\displaystyle \sup _{0<\varepsilon < 1}\sup_{ 0<t<1} \varepsilon^\theta t^{\frac{1}{p}+\varepsilon} f^*(t|\Omega|),\; \; \text{for}\; \theta\geqslant 0,  \\
\displaystyle \sup _{0<\varepsilon < 1}\sup_{ 0<t<1} \varepsilon^\theta t^{\varepsilon} f^*(t|\Omega|),\; \; \text{for}\; \theta >  0 \; \text{and} \; p=\infty,  \\
\displaystyle \inf _{0<\varepsilon<\frac{1}{p}}  \sup_{0<t<1} \varepsilon^\theta t^{\frac{1}{p}-\varepsilon} f^*(t|\Omega|),\;
\; \text{for}\; \theta< 0,
\end{array}\right.
\end{equation}
for $q=\infty$.

\end{definition}

\begin{remark}  
The grand Lorentz spaces $G L_{p, q}^\theta(\Omega)$, unlike the classical Lorentz spaces $L_{p, q}(\Omega)$, allow to consider the scale of spaces $G L_{\infty, q}^\theta (\Omega)$ (close to the space $L_{\infty}$). In fact, this property is one of the motivations for studying the spaces $G L_{p, q}^\theta(\Omega)$. From the definition of $GL_{p,q}^\theta(\Omega)$ follows its connection with the classical Lorentz space $L_{p,q}(\Omega)$, 
i.e. 
$L_{p,q}(\Omega)=G L_{p,q}^0(\Omega)$ with $0<p<\infty$ and $0<q\leqslant\infty$.
\end{remark}

For $\theta>0$, the nesting property
\begin{equation}\label{eq:nestingpropertiesogGrandLorentzspace}
GL_{p,q}^{-\theta} (\Omega)\hookrightarrow L_{p,q}(\Omega)  \hookrightarrow GL_{p,q}^\theta(\Omega),\end{equation}
holds,  and from it, by taking $q=p$, we obtain 
$$GL_{p, p}^{-\theta} (\Omega)\hookrightarrow L_p(\Omega) \hookrightarrow GL_{p, p}^\theta(\Omega).$$


In the case  $\textit{q}=\infty,$ there is an alternative characterisation of 
$GL_{p,q}^\theta(\Omega)$. 

	\begin{lem}\label{Lemma 1} Let $|\Omega|<\infty$ and $0<p<\infty$. Then,  for $\theta>0$, we have
\begin{equation}\label{1.2}\|f\|_{G L_{p, \infty}^\theta(\Omega)} \asymp \sup _{t>0} \frac{t^{\frac{1}{p}}}{|\ln t|^\theta} f^*(t |\Omega |),
\end{equation}
\begin{equation}\label{1.3}\|f\|_{G L_{p, \tau}^\theta(\Omega)} \lesssim\left( \int_0^1\left( \frac{t^{\frac{1}{p}}}{|\ln t|^\theta} f^*(t |\Omega |)\right)^\tau\frac{dt}t\right)^{\frac1\tau},
\end{equation}
and
\begin{equation}\label{1.4}\|f\|_{G L_{p,\tau}^{-\theta}(\Omega)} \gtrsim  \left(\int_0^1\left( t^{\frac1p}|\ln t|^\theta f^*(t |\Omega |)\right)^\tau\frac{dt}t \right)^{\frac1\tau}.\end{equation}
\end{lem}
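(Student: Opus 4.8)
The plan is to evaluate each of the three expressions by optimising the auxiliary parameter $\varepsilon$, reducing everything to the single rearrangement-type quantity appearing on the right-hand sides. The underlying mechanism in all three cases is the elementary fact that for a fixed $t\in(0,1)$ the function $\varepsilon\mapsto \varepsilon^{\theta}t^{\pm\varepsilon}$ is optimised (a supremum for $\theta>0$, an infimum for $\theta<0$) at $\varepsilon\asymp \theta/|\ln t|$, and at that value one has $\varepsilon^{\theta}t^{\pm\varepsilon}\asymp |\ln t|^{\mp\theta}$, up to constants depending only on $\theta$. So the heart of the argument is a careful interchange of the $\sup_{\varepsilon}$ (or $\inf_{\varepsilon}$) with the $t$-integral or $t$-supremum, together with a check that the optimising $\varepsilon$ indeed lies in the admissible range $(0,1)$ (respectively $(0,1/p)$) for the relevant range of $t$, and a separate treatment of the complementary range of $t$ where the constraint on $\varepsilon$ binds.

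For \eqref{1.2}, the $q=\infty$ case, I would first note that since $f^*$ is supported on an interval of measure $|\Omega|$, only $t\in(0,1)$ (after the substitution normalising by $|\Omega|$) contributes, so the outer supremum over $t>0$ on the right is effectively over $t\in(0,1)$ where $|\ln t| = -\ln t > 0$. Then the identity $\sup_{0<\varepsilon<1}\varepsilon^{\theta}t^{\varepsilon} \asymp |\ln t|^{-\theta}$ for $t$ bounded away from $1$ gives directly, after swapping the two suprema, that $\sup_{0<\varepsilon<1}\sup_{0<t<1}\varepsilon^{\theta}t^{1/p+\varepsilon}f^*(t|\Omega|) = \sup_{0<t<1} t^{1/p}f^*(t|\Omega|)\sup_{0<\varepsilon<1}\varepsilon^{\theta}t^{\varepsilon} \asymp \sup_{0<t<1} \frac{t^{1/p}}{|\ln t|^{\theta}}f^*(t|\Omega|)$. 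The only delicate point is the behaviour as $t\to 1^-$, where $|\ln t|\to 0$ and the optimising $\varepsilon = \theta/|\ln t|$ exceeds the admissible bound $1$; there one uses monotonicity of $f^*$ near the endpoint and the boundedness of $\varepsilon^{\theta}$ on $(0,1)$ to see that this range contributes a comparable amount, which is where the $\asymp$ (rather than equality) enters.

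For the upper bound \eqref{1.3}: inside the supremum over $\varepsilon$ I would pull $\varepsilon^{\theta}$ in under the integral as $\varepsilon^{\theta}t^{\varepsilon} \le \sup_{0<\varepsilon<1}(\varepsilon^{\theta}t^{\varepsilon})\lesssim |\ln t|^{-\theta}$ pointwise in $t$, uniformly in $\varepsilon$ (again away from $t=1$, with the near-$1$ range handled separately using that $f^*(t|\Omega|)$ is controlled by its values on a fixed subinterval), and then take the supremum over $\varepsilon$ on the outside, which does nothing since the bound no longer depends on $\varepsilon$. This yields $\|f\|_{GL_{p,\tau}^{\theta}(\Omega)} = \sup_{\varepsilon}\varepsilon^{\theta}\bigl(\int_0^1 (t^{1/p+\varepsilon}f^*)^{\tau}\frac{dt}{t}\bigr)^{1/\tau}\lesssim \bigl(\int_0^1 (t^{1/p}|\ln t|^{-\theta}f^*)^{\tau}\frac{dt}{t}\bigr)^{1/\tau}$. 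For the lower bound \eqref{1.4} with negative aggrandisation, the quasinorm is an infimum over $\varepsilon\in(0,1/p)$; choosing any fixed admissible $\varepsilon$ gives a lower bound of the form $\varepsilon^{-\theta}\bigl(\int_0^1(t^{1/p-\varepsilon}f^*)^{\tau}\frac{dt}{t}\bigr)^{1/\tau}$, but to recover the weight $|\ln t|^{\theta}$ one should instead bound $t^{1/p}|\ln t|^{\theta} \lesssim \inf_{0<\varepsilon<1/p}\varepsilon^{-\theta}t^{1/p-\varepsilon}$ pointwise (this is the reverse of the estimate above, valid since $|\ln t|^{\theta}\asymp \inf_{\varepsilon}\varepsilon^{-\theta}t^{-\varepsilon}$), substitute into the integral, and then pass the infimum over $\varepsilon$ outside the integral — which is legitimate in the direction needed for a lower bound since $\int \inf \le \inf \int$ fails in general but here we want $\bigl(\int (t^{1/p}|\ln t|^{\theta}f^*)^{\tau}\bigr)^{1/\tau} \lesssim \bigl(\int \inf_\varepsilon(\cdots)\bigr)^{1/\tau} \le \inf_\varepsilon\bigl(\int(\cdots)\bigr)^{1/\tau} = \|f\|_{GL^{-\theta}_{p,\tau}}$, using subadditivity of the infimum under the (quasi-)norm. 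The main obstacle, and the place requiring the most care, is the endpoint analysis near $t=1$: there the optimising $\varepsilon$ leaves the admissible window, the logarithmic weights degenerate, and one must argue that this region is harmless — which is exactly why \eqref{1.3} and \eqref{1.4} are one-sided estimates rather than two-sided equivalences, in contrast to \eqref{1.2}.
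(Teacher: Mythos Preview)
Your proposal is correct and follows essentially the same route as the paper: optimise $\varepsilon\mapsto\varepsilon^{\theta}t^{\pm\varepsilon}$ pointwise in $t$ (critical point $\varepsilon=|\theta|/|\ln t|$) to produce the logarithmic weight, then swap the $\sup$/$\inf$ over $\varepsilon$ with the $t$-supremum or $t$-integral in the direction that is always legitimate. One small slip in your write-up: contrary to your parenthetical remark, the inequality $\int\inf_\varepsilon\le\inf_\varepsilon\int$ \emph{does} hold in general, and that is precisely what you (correctly) use in the chain for \eqref{1.4}.
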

\begin{proof} Consider the function
$$\varphi(\varepsilon)=\varepsilon^\theta t^{\frac{1}{p}+\varepsilon \,\operatorname{sign} \theta},\quad 0<t \leqslant 1,  \quad 0 <\varepsilon \leqslant 1.$$
From the equation
$$
\varphi^{\prime}(\varepsilon)=\theta \varepsilon^{\theta-1} t^{\frac{1}{p}+\varepsilon \operatorname{sign} \theta}+\varepsilon^\theta t^{\frac{1}{p}+\varepsilon \operatorname{sign} \theta} \operatorname{sign} \theta \ln t=0,
$$
we get $\varepsilon=\frac{|\theta|}{|\ln t|}$.

Hence, for $\theta>0$
$$\sup _{0<\varepsilon \leq1} \varphi(\varepsilon) \asymp \frac{t^{\frac{1}{p}}}{|\ln t|^\theta}$$
and for $\theta <0$
$$\inf _{0<\varepsilon <\frac1 p} \varphi(\varepsilon) \asymp \frac{t^{\frac{1}{p}}}{|\ln t|^\theta}.$$
Thus
\begin{align*}
\|f\|_{G L_{p, \infty}^\theta(\Omega)}&=\sup _{0<\varepsilon \leqslant 1}  \sup _{t>0}  \varepsilon^\theta t^{\frac{1}{p}+\varepsilon } f^*(t|\Omega|)  
\\  & =\sup _{t>0}\left(\sup _{0< \varepsilon \leqslant 1} \varepsilon^\theta t^{\frac{1}{p}+\varepsilon }\right) f^*(t |\Omega|)\nonumber \\  & \asymp \sup _{t>0} \frac{t^{\frac{1}{p}}}{|\ln t|^\theta} f^*(t |\Omega|).  
\end{align*}
We also have 
\begin{align*}
\|f\|_{G L_{p, \tau}^\theta(\Omega)}&=\sup _{0<\varepsilon \leqslant 1} \varepsilon^\theta \left(\int_0^1\left( t^{\frac{1}{p}+\varepsilon } f^*(t|\Omega|)\right)^\tau\frac{dt}t\right)^{\frac1\tau}  
\\  & \leqslant\left(\int_0^1\left(\left(\sup _{0<\varepsilon \leqslant 1} \varepsilon^\theta t^{\frac{1}{p}+\varepsilon }\right) f^*(t |\Omega|) \right)^\tau\frac{dt}t\right)^{\frac1\tau}  \\  & \asymp \left( \int_0^1\left( \frac{t^{\frac{1}{p}}}{|\ln t|^\theta} f^*(t |\Omega |)\right)^\tau\frac{dt}t\right)^{\frac1\tau},
\end{align*}
and
\begin{align}
\|f\|_{G L_{p, \tau}^{-\theta} (\Omega)}&=\inf _{0<\varepsilon < \frac{1}{p}} \varepsilon^{-\theta} \left(\int_0^1\left( t^{\frac{1}{p}-\varepsilon } f^*(t|\Omega|)\right)^\tau\frac{dt}t\right)^{\frac1\tau}  \nonumber
\\  & \geqslant \left(\int_0^1\left( \left(\inf _{0<\varepsilon < \frac1 p}\varepsilon^{-\theta}\; t^{\frac{1}{p}-\varepsilon }\right) f^*(t|\Omega|)\right)^\tau\frac{dt}t\right)^{\frac1\tau} \nonumber \\  & \asymp  \left(\int_0^1\left( t^{\frac1p}|\ln t|^\theta f^*(t |\Omega |)\right)^\tau\frac{dt}t \right)^{\frac1\tau}, \nonumber
\end{align}
which ends the proof.
\end{proof}

\begin{ex} Let $\theta>0$,  $\delta>0$, and $\Omega\subset \mathbb{R}^{n}$ with  $|\Omega|=1$. If $f: \Omega\rightarrow \mathbb{R}$ is such that 
\[f^*(t)\asymp \frac{|\ln t|^{\theta-\frac1\tau-\delta}}{t^{\frac{1}{p}}},\]
then  $f \in G L_{p,\tau}^{\theta}(\Omega).$
\end{ex}

From the definition of $GL_{p,q}^\theta(\Omega)$ and well-known  properties of Lorentz spaces, the following properties hold:

\medskip

\begin{enumerate}[label=(P.\arabic*), ref=P.\arabic*] 
    \item \label{p1}  If $\theta \leqslant \theta_1$, then
$GL_{p,q}^\theta(\Omega)  \hookrightarrow G L_{p, q}^{\theta_1}(\Omega).$

\item \label{p2}  If $q<q_{1}$, then $GL_{p,q}^\theta(\Omega)  \hookrightarrow G L_{p,q_{1}}^{\theta}(\Omega).$ 

\item \label{p3} If $p<p_1, \; q< q_1 \in(0, \infty],$ then $GL_{p_1, q_1}^\theta(\Omega) \hookrightarrow
GL_{p,q}^\theta(\Omega).$  

\item \label{p4} Let $0<\delta<1, \;   \theta>0$. Then
\begin{equation} \label{property4_1}
\|f\|_{G L_{p, q}^\theta} \asymp \sup _{0<\varepsilon \leqslant \delta} \varepsilon^\theta\left(\int_0^1\left(t^{\frac{1}{p}+\varepsilon} f^*(t |\Omega|)\right)^q \frac{d t}{t}\right)^{1 / q},
\end{equation}
and for $0<\delta<\frac1{p}$
\begin{equation} \label{property4_2}
\|f\|_{G L_{p, q}^{-\theta}(\Omega)} \asymp \inf _{0<\varepsilon \leqslant \delta} \varepsilon^{-\theta}\left(\int_0^1\left(t^{\frac{1}{p}-\varepsilon} f^*(t |\Omega|)\right)^q \frac{d t}{t}\right)^{1 / q}. 
\end{equation}

\medskip

\item \label{p5} \label{itemfive}  If $0<p<\infty$,  $0<\tau\leqslant\infty$, and $\theta>0$, then
\begin{equation} \label{property5_2}
\|f\|_{G L_{p, \tau}^{\theta}(\Omega)} \asymp \sup _{1< k}k^{-\theta } \left(\sum_{m=-\infty}^0\left(2^{m(1/p+1/k)}f^*(2^m |\Omega|) \right)^\tau\right)^{\frac1\tau}
\end{equation}
and 
\begin{equation} \label{property5_1}
\|f\|_{G L_{p, \tau}^{-\theta}(\Omega)} \asymp \inf _{p<  k}k^\theta \left(\sum_{m=-\infty}^0\left(2^{m(1/p-1/k)}f^*(2^m |\Omega|) \right)^\tau\right)^{\frac1\tau}.
\end{equation}

\medskip

\item \label{p6} \textbf{H{\"o}lder's inequality} (cf. \cite[Theorem 2.61]{Castillo2021} for an explicit formulation of the constant in generalised Hölder's inequality in Lorentz spaces). \emph{Let $1<p, p_1, p_2<\infty$,  $1 \leqslant q, q_1, q_2 \leqslant \infty$,
$\theta>0$, 
$\frac{1}{p}
=\frac{1}{p_1}+\frac{1}{p_2}$, $ \frac{1}{q}=\frac{1}{q_1}+\frac{1}{q_2}$, and  $\theta=\theta_1+\theta_2 .$
Then
\begin{equation}\label{Lorentz-Hold}
\|f  g\|_{{L}_{p,q}(\Omega)} \lesssim \|f\|_{G L_{p_1, q_1}^{\theta}(\Omega)}\|g\|_{G L_{p_2, q_2}^{-\theta}(\Omega)}.
\end{equation}}
In particular,
\begin{equation} \label{Hold} \int_{\Omega}|f(x) g(x)| d x \leqslant\|f\|_{GL_{p,q}^\theta(\Omega)}\|g\|_{GL^{-\theta}_{p', q'}(\Omega)},
\end{equation} 
where $p', q'$ are conjugate parameters to $p, q$. For a proof of \eqref{Hold}, see the first part of the proof of Theorem \ref{theo6.1}. The same argument can be extended to show \eqref{Lorentz-Hold}, using the generalised H\"older inequality in Lorentz spaces.
\end{enumerate}

\medskip

We now recall the following function spaces, which will be used subsequently. 


\begin{description}
    \item[Lorentz--Karamata space] For $\theta\in \R$,  the Lorentz--Karamata space $L_{p,\tau,\theta} (\Omega)$ consists of  $f\in L^0(\Omega)$ 
    for which the quasi-norm
\begin{equation}\label{Lorentz-Karamataspaces}
\|f\|_{L_{p,\tau,\theta} (\Omega)}=\left(\int_0^1\left(t^{1/p}(1-\ln t)^{-\theta}f^*(t|\Omega|)\right)^\tau\frac{dt}t\right)^{\frac1\tau}
\end{equation}
is finite (with the usual modification when $\tau=\infty$); see \cite{GOT2005}. 

\item[Grand Lorentz space] For $\theta>0$, the grand Lorentz space $ L^{p),\tau, \theta}(\Omega) $ is
formed by $f \in L^0 (\Omega)$ 
for which
the quasi-norm
\begin{equation}\label{GrandLorentzspaces}
\|f\|_{L^{p),\tau,\theta} (\Omega)}
=\sup_{0<s<1}(1-\ln s)^{-\theta}\left(\int_s^1\left(t^{1/p}f^*(t|\Omega|)\right)^\tau\frac{dt}t\right)^{\frac1\tau}
\end{equation}
is finite; see \cite{AFH2020}.

\end{description}

In the next lemma, we obtain some relations between the newly introduced grand Lorentz spaces in \eqref{eq:2.1}-\eqref{eq:2.2} and the spaces defined in \eqref{Lorentz-Karamataspaces} and  \eqref{GrandLorentzspaces}.

\begin{lem}\label{lema:2.3} 
Let 
$\theta > 0$,  $0<p<\infty$, and   $0<\tau\leqslant\infty$.

If $\tau=\infty $ , then
\begin{equation} \label{1.8}
GL_{p,\infty}^\theta(\Omega)=L^{p),\infty,\theta} (\Omega)=L_{p,\infty,\theta} (\Omega).
\end{equation}
 If $0< \tau<\infty $ , then
\begin{equation} \label{1.9}
L_{p,\tau,\theta} (\Omega)\hookrightarrow GL_{p,\tau}^\theta(\Omega)\hookrightarrow L^{p),\tau,\theta} (\Omega)
\end{equation}
and
\begin{equation} \label{1.10}
GL_{p,\tau}^{-\theta}(\Omega)\hookrightarrow L_{p,\tau,-\theta} (\Omega).
\end{equation}
\end{lem}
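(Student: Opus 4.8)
The plan is to handle the three claims separately, in each case reducing everything to the pointwise description of the supremum/infimum over $\varepsilon$ obtained in Lemma \ref{Lemma 1}, and then comparing the weight $|\ln t|^{\pm\theta}$ appearing there with the weight $(1-\ln t)^{\mp\theta}$ appearing in the Lorentz--Karamata and grand Lorentz quasinorms. Since $|\Omega|<\infty$, the relevant range is $0<t\leqslant 1$ (after the change of variables $t\mapsto t|\Omega|$ and using that $f^*$ is nonincreasing, only small $t$ matters), and on $(0,1/2]$ one has $|\ln t|\asymp 1-\ln t$, while on $[1/2,1]$ the weight $|\ln t|$ degenerates to $0$ but this interval contributes a bounded amount because $f^*(t|\Omega|)\leqslant f^*(|\Omega|/2)<\infty$ is controlled by any of the quasinorms on a fixed subinterval bounded away from the degeneracy. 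This is the standard device for comparing these logarithmic weights, and it is the only genuinely delicate point; the rest is bookkeeping.

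For \eqref{1.8} (the case $\tau=\infty$): Lemma \ref{Lemma 1}, equation \eqref{1.2}, already gives $\|f\|_{GL_{p,\infty}^\theta(\Omega)}\asymp\sup_{t>0}\frac{t^{1/p}}{|\ln t|^\theta}f^*(t|\Omega|)$. I would show this is equivalent to both $\sup_{0<t<1}t^{1/p}(1-\ln t)^{-\theta}f^*(t|\Omega|)$ (which is $\|f\|_{L_{p,\infty,\theta}(\Omega)}$) and to $\sup_{0<s<1}(1-\ln s)^{-\theta}\sup_{s<t<1}t^{1/p}f^*(t|\Omega|)$ (which, after evaluating the inner $L_{p,\infty}$-type supremum, is $\|f\|_{L^{p),\infty,\theta}(\Omega)}$). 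The first equivalence is the weight comparison just described. For the second, note $\sup_{s<t<1}t^{1/p}f^*(t|\Omega|)\geqslant s^{1/p}f^*(s|\Omega|)$ gives one direction immediately after multiplying by $(1-\ln s)^{-\theta}$ and taking $\sup$ over $s$; the reverse uses that for $t>s$ the factor $(1-\ln s)^{-\theta}\leqslant(1-\ln t)^{-\theta}$ (as $\theta>0$ and $1-\ln\cdot$ is decreasing), so $(1-\ln s)^{-\theta}t^{1/p}f^*(t|\Omega|)\leqslant(1-\ln t)^{-\theta}t^{1/p}f^*(t|\Omega|)\leqslant\|f\|_{L_{p,\infty,\theta}}$.

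For \eqref{1.9} (the case $0<\tau<\infty$): The left embedding $L_{p,\tau,\theta}(\Omega)\hookrightarrow GL_{p,\tau}^\theta(\Omega)$ is exactly \eqref{1.3} of Lemma \ref{Lemma 1} combined with the weight comparison $|\ln t|^{-\theta}\lesssim(1-\ln t)^{-\theta}$ on $(0,1]$ (valid since for $t$ near $1$, $|\ln t|^{-\theta}$ blows up — so actually one must be careful: \eqref{1.3} has $|\ln t|^{-\theta}$ which is \emph{large} near $t=1$, hence this direction needs the observation that near $t=1$ the integrand is controlled differently). Here I would instead go back to the definition: $\varepsilon^\theta t^{1/p+\varepsilon}\leqslant\varepsilon^\theta t^{1/p}$ for $t\leqslant1$, and optimizing, or more simply bound $\|f\|_{GL_{p,\tau}^\theta}=\sup_\varepsilon\varepsilon^\theta(\int_0^1(t^{1/p+\varepsilon}f^*)^\tau\frac{dt}t)^{1/\tau}$ by splitting the integral at $t=1/2$; on $[1/2,1]$ use $\varepsilon^\theta\to0$ is false, so instead use property \eqref{property4_1} to restrict to $0<\varepsilon\leqslant\delta$ small, making $t^{1/p+\varepsilon}$ comparable to $t^{1/p}$ and the contribution of $[1/2,1]$ harmless. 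The right embedding $GL_{p,\tau}^\theta(\Omega)\hookrightarrow L^{p),\tau,\theta}(\Omega)$: for fixed $0<s<1$ pick $\varepsilon=1/|\ln s|$ (comparable to the optimal $\varepsilon$ from Lemma \ref{Lemma 1}, legitimate once $s$ is small so that $\varepsilon\leqslant\delta$); then $\varepsilon^\theta(\int_0^1(t^{1/p+\varepsilon}f^*)^\tau\frac{dt}t)^{1/\tau}\geqslant\varepsilon^\theta(\int_s^1(t^{1/p+\varepsilon}f^*)^\tau\frac{dt}t)^{1/\tau}\geqslant\varepsilon^\theta s^{\varepsilon}(\int_s^1(t^{1/p}f^*)^\tau\frac{dt}t)^{1/\tau}\asymp(1-\ln s)^{-\theta}(\int_s^1(t^{1/p}f^*)^\tau\frac{dt}t)^{1/\tau}$ since $s^\varepsilon=e^{-1}\asymp1$; taking $\sup$ over small $s$ (and handling $s$ near $1$ trivially since the integral $\int_s^1$ is then small and $f^*(|\Omega|)$-controlled) gives $\|f\|_{L^{p),\tau,\theta}}\lesssim\|f\|_{GL_{p,\tau}^\theta}$.

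For \eqref{1.10}: Start from the definition $\|f\|_{GL_{p,\tau}^{-\theta}}=\inf_{0<\varepsilon<1/p}\varepsilon^{-\theta}(\int_0^1(t^{1/p-\varepsilon}f^*)^\tau\frac{dt}t)^{1/\tau}$. For any such $\varepsilon$ and any $t\in(0,1]$ we have $\varepsilon^{-\theta}t^{1/p-\varepsilon}=\varepsilon^{-\theta}t^{1/p}t^{-\varepsilon}\geqslant\varepsilon^{-\theta}t^{1/p}$ — no, that is the wrong direction; rather $t^{-\varepsilon}\geqslant1$, so this \emph{is} a lower bound: $\varepsilon^{-\theta}t^{1/p-\varepsilon}f^*(t|\Omega|)\geqslant\varepsilon^{-\theta}t^{1/p}f^*(t|\Omega|)$ is false since we also need to recover the $(1-\ln t)^\theta$ growth. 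The correct route: use $\inf_\varepsilon\varepsilon^{-\theta}t^{1/p-\varepsilon}\asymp t^{1/p}|\ln t|^\theta$ \emph{pointwise} (the computation in Lemma \ref{Lemma 1}), so by Minkowski/monotonicity of the integral, $\varepsilon^{-\theta}(\int_0^1(t^{1/p-\varepsilon}f^*)^\tau\frac{dt}t)^{1/\tau}\geqslant(\int_0^1(\inf_\varepsilon\varepsilon^{-\theta}t^{1/p-\varepsilon}\,f^*)^\tau\frac{dt}t)^{1/\tau}\asymp(\int_0^1(t^{1/p}|\ln t|^\theta f^*)^\tau\frac{dt}t)^{1/\tau}$, which is \eqref{1.4}. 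Finally compare $|\ln t|^\theta$ with $(1-\ln t)^\theta$: since $|\ln t|^\theta\leqslant(1-\ln t)^\theta$ for all $t\in(0,1]$ (indeed $|\ln t|\leqslant1-\ln t$), we get $(\int_0^1(t^{1/p}(1-\ln t)^\theta f^*)^\tau\frac{dt}t)^{1/\tau}\geqslant(\int_0^1(t^{1/p}|\ln t|^\theta f^*)^\tau\frac{dt}t)^{1/\tau}$... again the wrong direction. The right inequality is the reverse: near $t=1$, $|\ln t|^\theta\to0$ while $(1-\ln t)^\theta\to1$, so $(1-\ln t)^\theta$ is genuinely bigger and $\|f\|_{L_{p,\tau,-\theta}}$ is the \emph{larger} norm — wait, $L_{p,\tau,-\theta}$ has weight $(1-\ln t)^{-(-\theta)}=(1-\ln t)^\theta$, so indeed $\|f\|_{L_{p,\tau,-\theta}}=(\int_0^1(t^{1/p}(1-\ln t)^\theta f^*)^\tau\frac{dt}t)^{1/\tau}\gtrsim(\int_0^1(t^{1/p}|\ln t|^\theta f^*)^\tau\frac{dt}t)^{1/\tau}$ provided we only integrate over, say, $(0,1/2]$ where the weights are comparable; on $[1/2,1]$ the $|\ln t|^\theta$-integral is finite and bounded by a constant times $f^*(|\Omega|/2)\lesssim\|f\|_{L_{p,\tau,-\theta}}$ (restricting the latter to $[1/2,1]$ where its weight is $\asymp1$). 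Combining, $\|f\|_{GL_{p,\tau}^{-\theta}}\gtrsim(\int_0^1(t^{1/p}|\ln t|^\theta f^*)^\tau\frac{dt}t)^{1/\tau}\gtrsim\|f\|_{L_{p,\tau,-\theta}}$ up to the harmless endpoint term, which after rearranging yields the embedding $GL_{p,\tau}^{-\theta}(\Omega)\hookrightarrow L_{p,\tau,-\theta}(\Omega)$.

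\textbf{Main obstacle.} The recurring subtlety — and the thing I would write most carefully — is the behaviour at $t$ near $1$, where the logarithmic weights $|\ln t|^{\pm\theta}$ degenerate or blow up while $(1-\ln t)^{\pm\theta}$ stays bounded above and below. In each of the three items one must isolate the interval $[1/2,1]$ (or use property \eqref{p4} to restrict $\varepsilon$ to a small interval $(0,\delta]$, which makes $t^{1/p\pm\varepsilon}\asymp t^{1/p}$ uniformly on that range and renders the endpoint contribution a bounded multiple of a single value $f^*(c|\Omega|)$ that is dominated by any of the quasinorms involved). Everything else is the elementary weight comparison $|\ln t|\asymp1-\ln t$ on $(0,1/2]$ together with the optimization already carried out in Lemma \ref{Lemma 1} and the choice $\varepsilon\asymp1/|\ln s|$ in the grand-space direction of \eqref{1.9}.
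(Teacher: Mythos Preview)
Your strategy is the paper's: reduce to the pointwise $\varepsilon$-optimisation from Lemma~\ref{Lemma 1} and compare the logarithmic weights. For the right-hand embedding in \eqref{1.9} you give the continuous version (restrict the integral to $[s,1]$ and take $\varepsilon=1/|\ln s|$ so that $t^{\varepsilon}\geqslant s^{\varepsilon}=e^{-1}$), whereas the paper passes through the discrete characterisation \eqref{property5_2} and truncates the sum to $-k\leqslant m\leqslant 0$ (so that $2^{m/k}\geqslant 1/2$); these are the same idea and equally short.

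There is, however, a genuine slip in your argument for \eqref{1.10}. After invoking \eqref{1.4} you control the $[1/2,1]$ contribution to $\|f\|_{L_{p,\tau,-\theta}}$ by $f^*(|\Omega|/2)\lesssim\|f\|_{L_{p,\tau,-\theta}}$, which places the target norm on its own right-hand side with no small constant to absorb it. The repair is to bound $f^*(|\Omega|/2)$ by $\|f\|_{GL^{-\theta}_{p,\tau}}$ directly: for every $0<\varepsilon<1/p$,
\[
\varepsilon^{-\theta}\Bigl(\int_0^1\bigl(t^{1/p-\varepsilon}f^*(t|\Omega|)\bigr)^\tau\frac{dt}{t}\Bigr)^{1/\tau}
\;\geqslant\;\varepsilon^{-\theta}\Bigl(\int_{1/2}^1\cdots\Bigr)^{1/\tau}
\;\gtrsim\;\varepsilon^{-\theta}\,f^*(|\Omega|/2),
\]
and $\varepsilon^{-\theta}>p^{\theta}$, so the infimum still dominates $f^*(|\Omega|/2)$. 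With this correction your splitting $(0,1/2]\cup[1/2,1]$ goes through cleanly.

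One more remark: your worry about $|\ln t|^{-\theta}$ blowing up near $t=1$ in the left embedding of \eqref{1.9} is a red herring. The asymptotic $\sup_{0<\varepsilon\leqslant1}\varepsilon^\theta t^{1/p+\varepsilon}\asymp t^{1/p}|\ln t|^{-\theta}$ only holds where the critical point $\theta/|\ln t|$ lies in $(0,1]$; for $t$ close to $1$ the supremum is attained at the right endpoint and is simply $\leqslant t^{1/p}\lesssim t^{1/p}(1-\ln t)^{-\theta}$. Hence the paper's one-line argument (pull $\sup_\varepsilon$ inside the integral, then bound by the Lorentz--Karamata weight) works on all of $(0,1]$ without any splitting or appeal to property~\eqref{property4_1}.
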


\begin{proof} Relation \eqref{1.8} follows from the definition of the spaces and \eqref{1.2}.

We now verify 
\eqref{1.9}. Let  $f\in G L_{p, \tau}^\theta(\Omega)$. From \eqref{property5_2} it follows
\begin{align*}
    \|f\|_{G L_{p, \tau}^\theta(\Omega)}&\asymp \sup _{1< k}k^{-\theta } \left(\sum_{m=-\infty}^0\left(2^{m(1/p+1/k)}f^*(2^m |\Omega|) \right)^\tau\right)^{\frac1\tau}\\
&\geqslant  \sup _{1< k}k^{-\theta } \left(\sum_{m=-k}^0\left(2^{m(1/p+1/k)}f^*(2^m |\Omega|)\right)^\tau\right)^{\frac1\tau}\\
&\geqslant 2^{-1}\sup _{1< k}k^{-\theta } \left(\sum_{m=-k}^0\left(2^{m/p}f^*(2^m |\Omega|) \right)^\tau\right)^{\frac1\tau}\\ & \gtrsim \|f\|_{L^{p),\tau,\theta} (\Omega)}.
\end{align*}
If $f\in L_{p,\tau,\theta} (\Omega) $, then
\begin{align*}
\|f\|_{G L_{p, \tau}^\theta(\Omega)}&=\sup _{0<\varepsilon \leqslant 1} \varepsilon^\theta \left(\int_0^1\left( t^{\frac{1}{p}+\varepsilon } f^*(t|\Omega|)\right)^\tau\frac{dt}t\right)^{\frac1\tau}\\
&\leqslant \left(\int_0^1\left( \sup _{0<\varepsilon \leqslant 1} \varepsilon^\theta t^{\frac{1}{p}+\varepsilon } f^*(t|\Omega|)\right)^\tau\frac{dt}t\right)^{\frac1\tau}\\ 
&\lesssim \|f\|_{L_{p,\tau,\theta} (\Omega)}.
\end{align*}
The embedding \eqref{1.10} follows from analogous considerations as above.
\end{proof}

\begin{ex}Let $|\Omega|=1$ and  $f :\Omega\to\mathbb R $ be such that  $f^*(t)= t^{-\frac1p}|\ln t |^{\theta-\frac1\tau},$ with   $0< \tau, \theta < \infty $. 
 Then
$$
 \int_0^1\left( \frac{t^{\frac{1}{p}}}{|\ln t|^\theta} f^*(t)\right)^\tau\frac{dt}t=\int_0^1\frac{dt}{t\ln t}=\infty,
$$
but
\begin{align*}
\|f\|_{G L_{p, \tau}^\theta(\Omega)}&=\sup _{0<\varepsilon < 1} \varepsilon^\theta \left(\int_0^1\left( t^{\frac{1}{p}+\varepsilon } f^*(t|\Omega|)\right)^\tau\frac{dt}t\right)^{\frac1\tau}\\
&=\sup _{0<\varepsilon < 1} \varepsilon^\theta \left(\int_0^1 \frac{t^{\varepsilon \tau }|\ln t|^{\theta\tau} dt}{t|\ln t|}
\right)^{\frac1\tau}\\
& =\sup _{0<\varepsilon < 1} \varepsilon^\theta \left(\int^{\infty}_0 e^{-\varepsilon s\tau }s^{\theta\tau} \frac{ds}{s}\right)^{\frac1\tau} \\
&= 
\left(\int^{\infty}_0 e^{ -s\tau }s^{\theta\tau} \frac{ds}{s}
\right)^{\frac1\tau}<\infty.
\end{align*}
\end{ex}

The next result shows that the grand Lebesgue space is embedded between appropriate grand Lorentz spaces.


\begin{lem}  Let 
$0<\theta <\infty$ and $0<p<\infty$. Then
\begin{equation}\label{eq:embeddinggrandLebesgue}
G L_{p+\delta,p}^\theta(\Omega) \hookrightarrow L^{p),\theta}(\Omega) \hookrightarrow G L_{p,\, p}^\theta(\Omega)\end{equation}
for all $\delta>0$. If $0<\delta \leqslant  p-1$, then 
\begin{equation}\label{eq:anotherembedding}
    G L_{p,p-\delta}^\theta(\Omega) \hookrightarrow L^{p),\theta}(\Omega).
\end{equation}
\end{lem}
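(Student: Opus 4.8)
The plan is to prove the three embeddings in \eqref{eq:embeddinggrandLebesgue}--\eqref{eq:anotherembedding} by reducing everything to the rearrangement characterisation \eqref{eq:grandlebesgueviarearrangement} of $L^{p),\theta}(\Omega)$ (with $\theta$ replaced appropriately; note the definition uses exponent $-\theta/p$ on $(1-\ln s)$) together with Lemma \ref{Lemma 1}, specifically \eqref{1.2} and \eqref{1.3} with $\tau=p$, and the nesting property \ref{p1}--\ref{p3}. The key observation is that all the spaces involved — $GL^\theta_{p,p}$, $L^{p),\theta}$, and the Lorentz--Karamata flavoured quantity on the right of \eqref{1.3} — are built from weighted $L^p(dt/t)$ integrals of $f^*$ with a logarithmic weight, so the embeddings amount to elementary pointwise or integral comparisons of these weights, and a Hardy-type manipulation to pass between the "$\int_s^1$ then $\sup_s$" form of \eqref{eq:grandlebesgueviarearrangement} and the "$\int_0^1$" form coming from Lemma \ref{Lemma 1}.

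For the right-hand embedding $L^{p),\theta}(\Omega)\hookrightarrow GL^\theta_{p,p}(\Omega)$, I would start from \eqref{1.3} with $\tau=p$, which gives $\|f\|_{GL^\theta_{p,p}}\lesssim\bigl(\int_0^1 (|\ln t|^{-\theta}t^{1/p}f^*(t|\Omega|))^p\,dt/t\bigr)^{1/p}$, and then split the integral dyadically over $s=2^{-j}$: on each piece $t\in(2^{-j-1},2^{-j})$ the weight $|\ln t|^{-\theta p}$ is comparable to $(1+j)^{-\theta p}\asymp(1-\ln s)^{-\theta p}$, and the inner integral over that piece is dominated by $\int_{2^{-j-1}}^1(t^{1/p}f^*)^p\,dt/t$, which by \eqref{eq:grandlebesgueviarearrangement} is at most $\|f\|_{L^{p),\theta}}^p(1-\ln 2^{-j-1})^{\theta}$. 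Summing the resulting series $\sum_j (1+j)^{-\theta p}(1+j)^{\theta}$ — wait, that does not obviously converge, so instead I would use the standard Hardy/summation-by-parts trick: set $a_j=\int_{2^{-j-1}}^{2^{-j}}(t^{1/p}f^*)^p\,dt/t$, so $\sum_{k\ge j}a_k\le C\|f\|^p(1+j)^{\theta}$, and estimate $\sum_j (1+j)^{-\theta p}a_j$ by Abel summation against the tails, using $\theta p>0$ so that $\sum_j[(1+j)^{-\theta p}-(2+j)^{-\theta p}](1+j)^{\theta}<\infty$. This is the standard device for comparing grand spaces with Lorentz--Karamata spaces.

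For the left-hand embedding $GL^\theta_{p+\delta,p}(\Omega)\hookrightarrow L^{p),\theta}(\Omega)$, the extra room $\delta>0$ in the first index should make this nearly trivial: the weight $t^{1/(p+\delta)}$ beats $t^{1/p}$ near $t=0$ by a power, so in \eqref{eq:grandlebesgueviarearrangement} one can absorb the whole logarithmic gain $(1-\ln s)^{\theta/p}$ into that power and reduce to an estimate by $\|f\|_{GL^\theta_{p+\delta,p}}$; concretely I would use property \ref{p5} (the sequential form \eqref{property5_2}) for $GL^\theta_{p+\delta,p}$, bound $\int_s^1(t^{1/p}f^*)^p\,dt/t$ dyadically, insert the missing factor $\varepsilon=1/k$ by optimising, and check the power of $t$ closes. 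For \eqref{eq:anotherembedding}, since $p-\delta<p$, property \ref{p2} gives $GL^\theta_{p,p-\delta}\hookrightarrow GL^\theta_{p,p}$, and then the already-proved right embedding of \eqref{eq:embeddinggrandLebesgue} would finish it — so this third claim is essentially a corollary. The main obstacle I anticipate is the convergence bookkeeping in the right-hand embedding: getting the Abel-summation estimate clean requires care with the precise normalisation of $\theta$ in \eqref{eq:grandlebesgueviarearrangement} versus \eqref{GrandLorentzspaces} (the former carries $\theta/p$ in the exponent, the latter $\theta$), so I would first fix conventions and then the rest is routine.
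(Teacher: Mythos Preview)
Your argument for \eqref{eq:anotherembedding} is wrong, not just sketchy. You write that property \eqref{p2} gives $GL^\theta_{p,p-\delta}\hookrightarrow GL^\theta_{p,p}$, and that the right embedding in \eqref{eq:embeddinggrandLebesgue} then finishes it. But the right embedding in \eqref{eq:embeddinggrandLebesgue} is $L^{p),\theta}\hookrightarrow GL^\theta_{p,p}$, so both of your arrows point \emph{into} $GL^\theta_{p,p}$; you cannot conclude $GL^\theta_{p,p-\delta}\hookrightarrow L^{p),\theta}$ from this. The embedding \eqref{eq:anotherembedding} is genuinely a strengthening of the left half of \eqref{eq:embeddinggrandLebesgue} (it drops the $+\delta$ in the first index at the cost of shrinking the second index), not a corollary of the right half. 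The paper handles it by a direct parameter comparison: for $0<\lambda<1/p^2$ one has $p(\lambda\varepsilon)>p-\varepsilon$, whence $L_{p(\lambda\varepsilon),\,p-\delta}\hookrightarrow L_{p-\varepsilon,\,p-\varepsilon}$ on a finite-measure domain, and rescaling $\varepsilon\mapsto\lambda\varepsilon$ only costs the harmless constant $\lambda^{-\theta}$.

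For the two embeddings in \eqref{eq:embeddinggrandLebesgue} your route is workable but far more laborious than the paper's. The paper never touches the rearrangement characterisation \eqref{eq:grandlebesgueviarearrangement} or Lemma~\ref{Lemma 1}; it simply exploits that both norms are $\sup_\varepsilon \varepsilon^\theta\|f\|_{X_\varepsilon}$ for suitable scales $X_\varepsilon$ of Lorentz/Lebesgue spaces, and compares the inner norms termwise via the standard finite-measure embeddings $L_{p_1,q_1}\hookrightarrow L_{p_0,q_0}$ for $p_0<p_1$. Concretely, $p(\varepsilon)<p-\varepsilon$ gives $L_{p-\varepsilon}\hookrightarrow L_{p(\varepsilon),p}$ and hence the right embedding; and for fixed $\delta>0$ one has $(p+\delta)(\varepsilon)>p-\varepsilon$ for small $\varepsilon$, yielding the left embedding. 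No dyadic decomposition, no Abel summation, no worrying about whether a logarithmic series converges. (Incidentally, in your Abel step you wrote the \emph{tail} sum $\sum_{k\ge j}a_k$ is controlled by the grand-Lebesgue norm, but $\int_s^1$ in \eqref{eq:grandlebesgueviarearrangement} corresponds to a \emph{partial} sum $\sum_{k\le j}$; the argument can be repaired, but this is a symptom of unnecessary complication.)
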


\begin{proof} Note that $p(\varepsilon) < p-\varepsilon$, for $0 < \varepsilon < p-1$ and $\frac{1}{p(\varepsilon)}:= \frac{1}{p}+ \varepsilon$. The right-hand side embedding in \eqref{eq:embeddinggrandLebesgue} follows from 
\[
L_{p-\varepsilon, p-\varepsilon} (\Omega) \hookrightarrow L_{p(\varepsilon), p } (\Omega).
\]

To prove the other embedding, we first recall the well-known result
\begin{equation}
    \|f\|_{L^{p),\theta}(\Omega)} \asymp \sup _{0< \varepsilon \leqslant \eta}\varepsilon^\theta\|f\|_{L_{p-\varepsilon}(\Omega)},
\end{equation}
where $\eta \in (0, p-1)$; see, e.g.,  argument in the proof of  \cite[Theorem 15.15]{KMRS2016}. Note also that, for $\delta>0$, there exists $\varepsilon_0 >0$ for which the inequality
\[
\frac{1}{(p+\delta)(\varepsilon)}:= \frac{1}{p+\delta} + \varepsilon < \frac{1}{p-\varepsilon}, \quad 0< \varepsilon <\varepsilon_0 
\]
is valid. Taking this into account, the embedding on the left-hand side in \eqref{eq:embeddinggrandLebesgue} now follows from the embedding
\[
L_{(p + \delta)(\varepsilon), p}(\Omega) \hookrightarrow L_{p-\varepsilon, p-\varepsilon}(\Omega).
\]

To prove \eqref{eq:anotherembedding}, note that $p(\lambda \varepsilon) > p-\varepsilon$ whenever $0<\lambda < 1/ p^2$. From the embedding 
\[
L_{p(\lambda \varepsilon), p-\delta }(\Omega) \hookrightarrow L_{p-\varepsilon, p-\varepsilon} (\Omega)
\]
and the elementary equality $\varepsilon^\theta = \lambda^{-\theta} (\lambda \varepsilon)^{\theta}$, the desired result follows.
\end{proof}

\begin{rem}
    Using \eqref{eq:embeddinggrandLebesgue} and the properties \eqref{p1}--\eqref{p3}, we can obtain further embedding relations; we leave the details to the reader.
\end{rem}

\section{O'Neil's inequality in grand Lorentz spaces}\label{sec:3}

For measurable functions \( f \) and \( g \) on \( \mathbb{R}^n \), recall that their convolution is formally defined as
$$
f*g(y)=\int_{\mathbb{R}^n} f(x)g(y-x)dx.
$$
We have the following classical inequalities:
\begin{description}
    \item[Young's inequality]
Let $1\leqslant p,q,r\leqslant\infty$ with $1+\frac1q=\frac1p+\frac1r$. Then
\begin{equation} \label{2.1}
\|f*g\|_{L_{q}(\mathbb{R}^n)}\leqslant \|f\|_{L_{p}(\mathbb{R}^n)}\|g\|_{L_{r}(\mathbb{R}^n)}.
\end{equation}

\medskip

\item[O'Neil's inequality]
Let $1< p,q,r<\infty$, $0<s,s_1,s_2\leqslant \infty$, $1+\frac1q=\frac1p+\frac1r$, and $\frac1s=\frac1{s_1}+\frac1{s_2}$. Then
\begin{equation} \label{2.2}
\|f*g\|_{L_{q,s}(\mathbb{R}^n)}\lesssim  d_{p,q,r}\|f\|_{L_{p,s_1}(\mathbb{R}^n)}\|g\|_{L_{r,s_2}(\mathbb{R}^n)},
\end{equation}
where
\begin{equation} \label{2.3}
d_{p,q,r}=
\max(p',r').
\end{equation}

\end{description}
It is worth mentioning that the importance of O'Neil's inequality lies, among other things, in its role in extending Hardy--Littlewood--Sobolev theorem on the boundedness of the Riesz operator 
 \begin{equation} \label{2.4}
 I_\alpha f(y)=\int_{\mathbb{R}^n}\frac{f(x)}{|x-y|^{n-\alpha}}dx = (f \ast |\cdot|^{\alpha-n})(y)
 \end{equation}
to Lorentz spaces. 

In what follows, we study the convolution over $\Omega$, still denoted by $f \ast g $ and given by
\[
f \ast g(y) = \int_\Omega f(x) g(y-x)dx,
\]
in Lorentz spaces, where we extend the functions $f$ and $g$ by zero, whenever necessary.





We start with a proto-O'Neil-type inequality, where the target and source spaces have different natures, namely classical Lorentz spaces and grand Lorentz spaces.

\begin{lem}\label{Lemma6} Let $1< p<q<\infty$, $ \theta \geqslant 0$,
$\frac{1}{r}=1+\frac{1}{q}-\frac{1}{p}$, and $0<\tau \leqslant \infty$.
Then \begin{equation} \label{conv1}
\|f * g\|_{L_{q,\tau}(\Omega)} \lesssim d_{p,q,r} \|f\|_{GL_{p,\tau}^{-\theta}(\Omega)}\|g\|_{GL^\theta_{r,\infty}(\Omega)}
\end{equation}
and
\begin{equation} \label{conv2}
\|f * g\|_{L_{q,\tau}(\Omega)} \lesssim  d_{p,q,r}\|f\|_{GL_{p,\tau}^{\theta}(\Omega)}\|g\|_{GL^{-\theta}_{r,\infty}(\Omega)},
\end{equation}
where the constant $d_{p,q,r}$ is defined in \eqref{2.3}.
\end{lem}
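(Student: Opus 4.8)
The plan is to reduce the grand-Lorentz estimate to a family of classical O'Neil inequalities indexed by the aggrandisation parameter $\varepsilon$, and then to exploit the $\sup$/$\inf$ structure of the grand quasinorms to absorb the $\varepsilon$-powers. For \eqref{conv1}, fix $\theta>0$ (the case $\theta=0$ being the genuine O'Neil inequality \eqref{2.2} with $s_2=\infty$). I would start from the classical O'Neil inequality applied with exponents shifted by $\varepsilon$: for each small $\varepsilon>0$, set $\frac{1}{p(\varepsilon)}=\frac1p-\varepsilon$ and choose $r(\varepsilon)$ so that $1+\frac1q=\frac{1}{p(\varepsilon)}+\frac{1}{r(\varepsilon)}$, i.e. $\frac{1}{r(\varepsilon)}=\frac1r+\varepsilon$. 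Then \eqref{2.2} with the triple $(p(\varepsilon),q,r(\varepsilon))$ and $(s_1,s_2)=(\tau,\infty)$ gives
\[
\|f\ast g\|_{L_{q,\tau}(\Omega)}\lesssim d_{p(\varepsilon),q,r(\varepsilon)}\,\|f\|_{L_{p(\varepsilon),\tau}(\Omega)}\,\|g\|_{L_{r(\varepsilon),\infty}(\Omega)}.
\]
Since the target space $L_{q,\tau}(\Omega)$ does not depend on $\varepsilon$, the left-hand side is a fixed quantity, so I can multiply through by $\varepsilon^{-\theta}=\varepsilon^{-\theta}$ and take the infimum over $0<\varepsilon<1/p$ on the right: this turns $\|f\|_{L_{p(\varepsilon),\tau}(\Omega)}$ into (essentially) $\|f\|_{GL_{p,\tau}^{-\theta}(\Omega)}$ after inserting the matching factor, while $\sup$ over $\varepsilon$ of $\varepsilon^{\theta}\|g\|_{L_{r(\varepsilon),\infty}(\Omega)}$ is comparable to $\|g\|_{GL_{r,\infty}^{\theta}(\Omega)}$ by Definition \ref{def:1}. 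The bookkeeping is: write $1 = \varepsilon^{-\theta}\cdot\varepsilon^{\theta}$ inside the product, group $\varepsilon^{-\theta}$ with the $f$-factor and $\varepsilon^{\theta}$ with the $g$-factor, then take $\inf_\varepsilon$ of the $f$-side (legitimate since the other factors can be pulled out appropriately — see below) and recognise the $g$-side as bounded by the grand norm for every $\varepsilon$.

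The one subtlety is that the $\inf$ and $\sup$ act on different factors of the same product, so I cannot directly ``split'' the infimum. I would handle this by fixing $\varepsilon$ first: for each admissible $\varepsilon$ the chain above yields
\[
\|f\ast g\|_{L_{q,\tau}(\Omega)}\lesssim d_{p(\varepsilon),q,r(\varepsilon)}\Bigl(\varepsilon^{-\theta}\|f\|_{L_{p(\varepsilon),\tau}(\Omega)}\Bigr)\Bigl(\varepsilon^{\theta}\|g\|_{L_{r(\varepsilon),\infty}(\Omega)}\Bigr),
\]
and the second bracket is $\leqslant\|g\|_{GL_{r,\infty}^{\theta}(\Omega)}$ for every $\varepsilon$ (using \eqref{eq:2.2}, third line read with $+$sign via the sign convention, or directly from the definition of $GL_{r,\infty}^{\theta}$). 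Hence, dropping that factor and taking $\inf$ over $\varepsilon$ of what remains gives $\|f\ast g\|_{L_{q,\tau}(\Omega)}\lesssim\bigl(\sup_\varepsilon d_{p(\varepsilon),q,r(\varepsilon)}\bigr)\,\|g\|_{GL_{r,\infty}^{\theta}(\Omega)}\,\inf_\varepsilon\varepsilon^{-\theta}\|f\|_{L_{p(\varepsilon),\tau}(\Omega)}$, and the last infimum is exactly $\|f\|_{GL_{p,\tau}^{-\theta}(\Omega)}$ (up to modifying the range of $\varepsilon$, which is harmless by property \eqref{p4}). It remains to check that $\sup_{0<\varepsilon<\varepsilon_0}d_{p(\varepsilon),q,r(\varepsilon)}\lesssim d_{p,q,r}$ for $\varepsilon_0$ small: since $d_{p,q,r}=\max(p',r')$ and $p(\varepsilon)\to p$, $r(\varepsilon)\to r$ as $\varepsilon\to0$ with $p(\varepsilon)\geqslant 1$, $r(\varepsilon)\geqslant 1$, the conjugate exponents stay in a bounded neighbourhood; a short computation comparing $p(\varepsilon)'$ with $p'$ confirms $d_{p(\varepsilon),q,r(\varepsilon)}\leqslant C\,d_{p,q,r}$, absorbing $C$ into $\lesssim$.

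The second inequality \eqref{conv2} is obtained by the mirror-image argument: now set $\frac{1}{p(\varepsilon)}=\frac1p+\varepsilon$, $\frac{1}{r(\varepsilon)}=\frac1r-\varepsilon$, apply the same classical O'Neil inequality, pair $\varepsilon^{\theta}$ with the $f$-factor (recognising $\sup_\varepsilon\varepsilon^{\theta}\|f\|_{L_{p(\varepsilon),\tau}}\asymp\|f\|_{GL_{p,\tau}^{\theta}(\Omega)}$) and $\varepsilon^{-\theta}$ with the $g$-factor (recognising $\inf_\varepsilon\varepsilon^{-\theta}\|g\|_{L_{r(\varepsilon),\infty}}\asymp\|g\|_{GL_{r,\infty}^{-\theta}(\Omega)}$); here one fixes $\varepsilon$, bounds the $f$-bracket by its supremum $\|f\|_{GL_{p,\tau}^{\theta}(\Omega)}$, and then takes $\inf_\varepsilon$ of the remaining $g$-bracket. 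The constant estimate $\sup_{\varepsilon}d_{p(\varepsilon),q,r(\varepsilon)}\lesssim d_{p,q,r}$ is checked exactly as before. I expect the main obstacle — modest as it is — to be this uniform control of the O'Neil constant $d_{p(\varepsilon),q,r(\varepsilon)}$ as the exponents are perturbed, together with making sure the admissible range $0<\varepsilon<\varepsilon_0$ appearing naturally from O'Neil's hypotheses ($1<p(\varepsilon),r(\varepsilon)<\infty$) can be enlarged to the full range in the definition of the grand norm, which is precisely what property \eqref{p4} is there to provide.
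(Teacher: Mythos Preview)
Your proposal is correct and follows essentially the same route as the paper: shift the exponents by $\varepsilon$ so that $1/p(-\varepsilon)+1/r(\varepsilon)=1+1/q$, apply the classical O'Neil inequality \eqref{2.2}, insert $1=\varepsilon^{-\theta}\varepsilon^{\theta}$, bound $\varepsilon^{\theta}\|g\|_{L_{r(\varepsilon),\infty}}\leqslant\|g\|_{GL_{r,\infty}^{\theta}}$, and then take the infimum over $\varepsilon$ to recover $\|f\|_{GL_{p,\tau}^{-\theta}}$. Your extra care about the uniform control of $d_{p(\varepsilon),q,r(\varepsilon)}$ and about matching the $\varepsilon$-range via property~\eqref{p4} is not made explicit in the paper (which simply absorbs these into $\lesssim$), but is correct and harmless.
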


\begin{proof} Let $ 0 < \varepsilon < \min\left(\frac{1}{p}, \frac{1}{r'}\right)$,
$$
\frac1{p(-\varepsilon)}=\frac1p-\varepsilon,\;\;\;\;\;\frac1{r(\varepsilon)}=\frac1r+\varepsilon.
$$
Thus, $1+\frac1q=\frac1{p(-\varepsilon)}+\frac1{r(\varepsilon)}$, where $1<q, p(-\varepsilon), r(\varepsilon)<\infty$. Hence, we apply O'Neil's inequality \eqref{2.2} to get 
\begin{align*}
\|f * g\|_{L_{q,\tau}(\Omega)} &\lesssim \|f\|_{L_{p(-\varepsilon),\tau}(\Omega)}\|g\|_{L_{r(\varepsilon),\infty}(\Omega)}\\
&=\varepsilon^{-\theta}\|f\|_{L_{p(-\varepsilon),\tau}(\Omega)} \varepsilon^{\theta}\|g\|_{L_{r(\varepsilon),\infty}(\Omega)} \\
&\leqslant\varepsilon^{-\theta}\|f\|_{L_{p(-\varepsilon),\tau}(\Omega)} \|g\|_{GL_{r,\infty}^\theta(\Omega)}.
\end{align*}
Since $\varepsilon>0$ is arbitrary,   we obtain 
$$
\|f * g\|_{L_{q,\tau}(\Omega)} \lesssim \|f\|_{GL_{p,\tau}^{-\theta}(\Omega)}\|g\|_{GL^\theta_{r,\infty}(\Omega)}.
$$
The second inequality in the statement is proved similarly. 
\end{proof}

\begin{thm}[O'Neil-type inequality]\label{O'Neil} Let $1< p< q<\infty$, $ 0\leqslant \theta, \theta_0,\theta_1$, 
$\frac{1}{r}=1+\frac{1}{q}-\frac{1}{p}$, and $0<\tau \leqslant\infty$.

If  $ \theta=\theta_1+\theta_0,$ then
 \begin{equation} \label{2.12}
\|f * g\|_{GL_{q,\tau}^{\theta_1}(\Omega)} \lesssim \|f\|_{GL_{p,\tau}^{-\theta_0}(\Omega)}\|g\|_{GL^\theta_{r,\infty}(\Omega)}
\end{equation}
  and \begin{equation} \label{2.13}
\|f * g\|_{GL_{q,\tau}^{-\theta_1}(\Omega)} \lesssim \|f\|_{GL_{p,\tau}^{\theta_0}(\Omega)}\|g\|_{GL^{-\theta}_{r,\infty}(\Omega)}.
\end{equation}

If  $ \theta=\theta_1-\theta_0,$ then
\begin{equation} \label{2.14}
\|f * g\|_{GL_{q,\tau}^{\theta_1}(\Omega)} \lesssim \|f\|_{GL_{p,\tau}^{\theta_0}(\Omega)}\|g\|_{GL^{\theta}_{r,\infty}(\Omega)}
\end{equation}
and 
\begin{equation} \label{2.15}
\|f * g\|_{GL_{q,\tau}^{-\theta_1}(\Omega)} \lesssim \|f\|_{GL_{p,\tau}^{-\theta_0}(\Omega)}\|g\|_{GL^{\theta}_{r,\infty}(\Omega)}.
\end{equation}
\end{thm}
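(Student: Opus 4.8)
The plan is to bootstrap from Lemma \ref{Lemma6}, which already gives the four convolution estimates with a \emph{classical} Lorentz target $L_{q,\tau}(\Omega)$, by reintroducing an aggrandisation parameter on the $q$-index. The key observation is that the proof of Lemma \ref{Lemma6} never used the full strength of O'Neil's inequality at the fixed exponent $q$: for each admissible perturbation $\varepsilon$ one actually controls $\|f\ast g\|_{L_{q(\pm\sigma),\tau}}$ for a shifted target exponent $q(\pm\sigma)$ with $\tfrac1{q(\sigma)}=\tfrac1q\pm\sigma$, as long as the Young-type relation $1+\tfrac1{q(\sigma)}=\tfrac1{p(\cdot)}+\tfrac1{r(\cdot)}$ is preserved by correspondingly shifting the $p$- or $r$-index. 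So I would redo the computation of Lemma \ref{Lemma6} while tracking a second free parameter $\sigma$ for the target, and then take a supremum (for positive $\theta_1$, using \eqref{eq:2.1}) or an infimum (for negative $\theta_1$, using the third branch of \eqref{eq:2.1}) over $\sigma$, weighted by $\sigma^{\pm\theta_1}$.

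Concretely, for \eqref{2.12}: fix $\sigma \in (0,\delta)$ small, set $\tfrac1{q(\sigma)}=\tfrac1q+\sigma$ and choose a small $\varepsilon>0$ together with an auxiliary shift so that the exponents $q(\sigma), p(-\varepsilon), r(\varepsilon)$ satisfy the Young relation and all lie in $(1,\infty)$; apply Lemma \ref{Lemma6} (or directly O'Neil) at these exponents to get
\[
\|f\ast g\|_{L_{q(\sigma),\tau}(\Omega)} \lesssim \varepsilon^{-\theta_0}\|f\|_{L_{p(-\varepsilon),\tau}(\Omega)}\,\varepsilon^{-\theta_1}\sigma^{-\theta_1}\cdot \sigma^{\theta_1}\varepsilon^{\theta_1+\theta_0}\|g\|_{L_{r(\varepsilon),\infty}(\Omega)},
\]
after inserting $1=\varepsilon^{-(\theta_1+\theta_0)}\varepsilon^{\theta}$ and $1=\sigma^{-\theta_1}\sigma^{\theta_1}$. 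Now multiply both sides by $\sigma^{\theta_1}$, bound $\varepsilon^{-\theta_0}\|f\|_{L_{p(-\varepsilon),\tau}}\le \|f\|_{GL_{p,\tau}^{-\theta_0}(\Omega)}$ by Definition \ref{def:1}, bound $\varepsilon^{\theta}\|g\|_{L_{r(\varepsilon),\infty}}\le \|g\|_{GL_{r,\infty}^{\theta}(\Omega)}$ likewise, and choose $\varepsilon$ comparable to $\sigma$ (so $\sigma^{\theta_1}\varepsilon^{-\theta_1}\asymp 1$); taking the supremum over $\sigma\in(0,\delta)$ and invoking the equivalent quasinorm \eqref{property4_1} yields \eqref{2.12}. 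For \eqref{2.13} one runs the same argument with $\tfrac1{q(\sigma)}=\tfrac1q-\sigma$, takes an infimum over $\sigma$, and uses \eqref{property4_2}; the roles of $f$ and $g$ and the sign conventions are swapped exactly as in the second half of Lemma \ref{Lemma6}. For the relations $\theta=\theta_1-\theta_0$, i.e. \eqref{2.14} and \eqref{2.15}, the same scheme applies but now the factor of $\varepsilon$ one redistributes is $\varepsilon^{\theta_0+\theta}=\varepsilon^{\theta_1}$ (respectively with the infimum branch), which is precisely what is needed to absorb the $\sigma^{-\theta_1}$ that appears after reweighting the target.

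The main obstacle I anticipate is the bookkeeping of admissibility: one must verify that for every small $\sigma>0$ there is a genuinely positive range of $\varepsilon$ for which all three shifted exponents stay in $(1,\infty)$ and the Young relation holds \emph{exactly}, not just approximately — this is where the hypothesis $p<q$ (hence $r>1$, with room to move) is essential, and it is the analogue of the inequalities $p(\varepsilon)<p-\varepsilon$ etc. used in the grand-Lebesgue embedding lemma above. A secondary point to be careful about is that Definition \ref{def:1} restricts $\varepsilon<1$ (or $\varepsilon<\tfrac1p$), so the coupling $\varepsilon\asymp\sigma$ must be chosen with a small enough constant; property \eqref{p4} guarantees that replacing $\varepsilon<1$ by $\varepsilon\le\delta$ costs only an equivalence constant, so this is harmless. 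Once admissibility is pinned down, the estimates are a direct repackaging of Lemma \ref{Lemma6} together with the two-parameter sup/inf trick, and the constants implicit in $\lesssim$ depend only on $p,q,r$ (through $d_{p,q,r}$) and on $\theta,\theta_0,\theta_1,\tau$.
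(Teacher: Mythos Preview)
Your overall strategy --- shift the target exponent $q$, apply Lemma~\ref{Lemma6} (or O'Neil) at the shifted exponents, redistribute powers of the shift, then optimise --- is exactly the approach the paper takes. But your concrete execution for \eqref{2.12} contains a real error. The step ``bound $\varepsilon^{-\theta_0}\|f\|_{L_{p(-\varepsilon),\tau}}\le \|f\|_{GL_{p,\tau}^{-\theta_0}(\Omega)}$'' has the inequality backwards: by Definition~\ref{def:1} the right-hand side is the \emph{infimum} over $\varepsilon$ of the left, so each individual term dominates the infimum, not the other way round. Your remedy of coupling $\varepsilon\asymp\sigma$ does not fix this: with $\varepsilon=\sigma$ you are left with $\sigma^{-\theta_0}\|f\|_{L_{p(-\sigma),\tau}}$ on the right, and its supremum over $\sigma$ is in general not controlled by $\|f\|_{GL_{p,\tau}^{-\theta_0}}$. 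A related problem is that the triple $q(\sigma),p(-\varepsilon),r(\varepsilon)$ you display does not satisfy the Young relation unless $\sigma=0$; the $r$-shift must absorb both parameters, i.e.\ one needs $r(\sigma+\varepsilon)$.

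The paper avoids both issues by using a \emph{single} parameter: it sets $\tfrac1{q(\varepsilon)}=\tfrac1q+\varepsilon$ and $\tfrac1{r(\varepsilon)}=\tfrac1r+\varepsilon$, keeps $p$ fixed, and applies Lemma~\ref{Lemma6} directly at $q(\varepsilon),p,r(\varepsilon)$. This already produces $\|f\|_{GL_{p,\tau}^{-\theta_0}}$ on the right (the infimum is built into Lemma~\ref{Lemma6}) together with $\|g\|_{GL^{\theta_0}_{r(\varepsilon),\infty}}$; one then uses the nesting $(r(\varepsilon))(\delta)=r(\varepsilon+\delta)$ to bound $\varepsilon^{\theta_1}\|g\|_{GL^{\theta_0}_{r(\varepsilon),\infty}}\le\sup_\delta(\varepsilon+\delta)^{\theta}\|g\|_{L_{r(\varepsilon+\delta),\infty}}\lesssim\|g\|_{GL^{\theta}_{r,\infty}}$, and takes $\sup_\varepsilon$. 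If you insist on two parameters, the correct order is: apply O'Neil at $q(\sigma),p(-\varepsilon),r(\sigma+\varepsilon)$; bound the $g$-factor first via $\sigma^{\theta_1}\varepsilon^{\theta_0}\le(\sigma+\varepsilon)^{\theta}$; \emph{then} take $\inf_\varepsilon$ of the remaining $\varepsilon^{-\theta_0}\|f\|_{L_{p(-\varepsilon),\tau}}$ (legitimate since the left side no longer depends on $\varepsilon$); finally $\sup_\sigma$. The cases \eqref{2.13}--\eqref{2.15} follow the same template with signs and sup/inf roles swapped.
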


\begin{proof}
To show \eqref{2.12}, 
let $0<\varepsilon< \min \{\frac1p-\frac1{q}, \;\frac1{2r'}\}$ and 
$$
\frac1{q(\varepsilon)}=\frac1q+\varepsilon,\quad 
\frac1{r(\varepsilon)}=\frac1r+\varepsilon.
$$

Thus, $1 < p< q(\varepsilon) < \infty.$

By using Lemma \ref{Lemma6} and the fact that  $\frac{1}{q(\varepsilon)} + 1 = \frac{1}{p} + \frac1{r(\varepsilon)}$, we have 
$$
\| f*g \|_{L_{q(\varepsilon),\tau}(\Omega)} \lesssim \| f \|_{GL_{p, \tau}^{-\theta_0}(\Omega)} \| g \|_{GL^{\theta_0}_{r(\varepsilon),\infty}(\Omega)}.
$$
Noting that $(r(\varepsilon))(\delta)= r(\varepsilon + \delta)$, we get
\begin{align*}
\varepsilon^{\theta_1}\| g \|_{GL^{\theta_0}_{r(\varepsilon),\infty}(\Omega)} &\asymp\varepsilon^{\theta_1}\sup_{0<\delta< \frac1{2r'}}\delta^{\theta_0}\| g \|_{L_{r(\varepsilon+\delta),\infty}(\Omega)}\\
&\leqslant \sup_{0<\delta< \frac1{2r'}}(\varepsilon + \delta)^{\theta_1}(\varepsilon + \delta)^{\theta_0}\| g \|_{L_{r(\varepsilon+\delta),\infty}(\Omega)}\\ & \lesssim  \|g\|_{GL^\theta_{r,\infty}(\Omega)}.
\end{align*}
Then 
$$
\varepsilon^{\theta_1}\| f*g \|_{L_{q(\varepsilon),\tau}(\Omega)} \lesssim \| f \|_{GL_{p, \tau}^{-\theta_0}(\Omega)} \| g \|_{GL^{\theta}_{r,\infty}(\Omega)}.
$$

Now, from the arbitrariness of $\varepsilon>0$ it follows the desired result.

To show \eqref{2.13}, let $0<\varepsilon< \min\{\frac1{q},\frac1{2r}\}$,
$$
\frac1{q(-\varepsilon)}=\frac1q-\varepsilon,\;\;\;\;\;
\frac1{r(-\varepsilon)}=\frac1r-\varepsilon.
$$

Since $\frac{1}{q(-\varepsilon)} + 1 = \frac{1}{p} + \frac1{r(-\varepsilon)}$ , from Lemma  \ref {Lemma6} it follows that
$$
\| f*g \|_{L_{q(-\varepsilon),\tau} (\Omega)}\lesssim \| f \|_{GL_{p, \tau}^{\theta_0}(\Omega)} \| g \|_{GL^{-\theta_0}_{r(-\varepsilon),\infty}(\Omega)},
$$
yielding 
\begin{align*}
\varepsilon^{-\theta_1}\| g \|_{GL^{-\theta_0}_{r(-\varepsilon),\infty}(\Omega)} &\asymp\varepsilon^{-\theta_1}\inf_{0<\delta<\frac1{2r}}\delta^{-\theta_0}\| g \|_{L_{r(-\varepsilon-\delta),\infty}(\Omega)}\\
&\leqslant \varepsilon^{-\theta_1-\theta_0}\| g \|_{L_{r(-2\varepsilon),\infty}(\Omega)}.
\end{align*}

Thus, we compute 
\begin{align*}
\|f * g\|_{GL_{q,\tau}^{-\theta_1}(\Omega)} &\leqslant \varepsilon^{-\theta_1}\|f*g\|_{L_{q(-\varepsilon),\tau}(\Omega)}\\
&\lesssim \| f \|_{GL_{p, \tau}^{\theta_0}(\Omega)}\varepsilon ^{-\theta_1-\theta_0}\| g \|_{L_{r(-2\varepsilon),\infty}(\Omega)}.
\end{align*}
From the arbitrariness of \( \varepsilon > 0 \), the result follows.

To prove \eqref{2.14}, let $0<\varepsilon< \min \{\frac1{q'},\frac1{r'}\}$.
Lemma \ref{Lemma6} implies
$$
\| f*g \|_{L_{q(\varepsilon),\tau}(\Omega)} \lesssim \| f \|_{GL_{p, \tau}^{\theta_0}(\Omega)} \| g \|_{GL^{-\theta_0}_{r(\varepsilon),\infty}(\Omega)},
$$
thus 
\begin{align*}
\varepsilon^{\theta_1}\| f*g \|_{L_{q(\varepsilon),\tau}(\Omega)} &\lesssim \| f \|_{GL_{p, \tau}^{\theta_0}(\Omega)} \varepsilon^{\theta_1}\inf_{0<\delta<\frac1{r'}}\delta^{-\theta_0}\| g \|_{L_{r(\varepsilon-\delta),\infty}(\Omega)}\\
&\leqslant\| f \|_{GL_{p, \tau}^{\theta_0}(\Omega)} \varepsilon^{\theta_1}(\varepsilon/2)^{-\theta_0}\| g \|_{L_{r(\varepsilon/2),\infty}(\Omega)}\\ &\leqslant 2^{\theta_1}\| f \|_{GL_{p, \tau}^{\theta_0}(\Omega)} \|g\|_{GL^\theta_{r,\infty}(\Omega)},
\end{align*}
which gives the desired result. 

Inequality \eqref{2.15} can be proved as inequality \eqref{2.13}.
\end{proof}

For the next theorem, we introduce the power-logarithmic Riesz operator $I_{\alpha,\theta}$ (over $\Omega \subset \mathbb R^n$) given by
\[
I_{\alpha,\theta} f(y)= \int_\Omega \frac{f(x) |\ln |y-x||^\theta}{|y-x|^{n-\alpha}} dx = (f \ast K_{\alpha, \theta}) (y),  
\]
where $K_{\alpha, \theta} = |x|^{\alpha-n} |\ln |x||^\theta  $.

\begin{thm}\label{Hardy-Littlewood-Sobolev-type}
	Let $1<p<q<\infty,$
$\alpha=\frac{1}{p}-\frac{1}{q},$ $\theta=\theta_1-\theta_0,$ and $\theta_0, \theta_1 ,\theta \geq 0$.
Then the operator $I_{\alpha, \theta}: G L_{p,\tau}^{\theta_0}([0,1]) \longrightarrow  
 G L_{q,\tau}^{\theta_1}([0,1])$ is bounded. 

In particular, if $\theta_0=0$, then
$I_{\alpha, \theta}: L_p ([0,1]) \longrightarrow GL_{q, p}^{\theta}([0,1])$
is bounded.
\end{thm}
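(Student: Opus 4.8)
The plan is to deduce Theorem \ref{Hardy-Littlewood-Sobolev-type} directly from the O'Neil-type inequality \eqref{2.14} of Theorem \ref{O'Neil}, by identifying $I_{\alpha,\theta}f$ with a convolution $f \ast K_{\alpha,\theta}$ and checking that the kernel $K_{\alpha,\theta}(x) = |x|^{\alpha-n}|\ln|x||^\theta$ lies in the appropriate grand Lorentz space $GL^{\theta}_{r,\infty}(\Omega)$ with $\Omega = [0,1]$. First I would fix the scaling exponent: with $\alpha = \frac1p - \frac1q$ and $\frac1r = 1 + \frac1q - \frac1p = 1 - \alpha$, the hypotheses $1 < p < q < \infty$ give $1 < r < \infty$, so all the parameter constraints of Theorem \ref{O'Neil} are met, and the case split $\theta = \theta_1 - \theta_0$ with $\theta_0,\theta_1,\theta \geq 0$ is exactly the one covered by \eqref{2.14}. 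Thus once the kernel membership is established, \eqref{2.14} immediately yields $\|f \ast K_{\alpha,\theta}\|_{GL^{\theta_1}_{q,\tau}(\Omega)} \lesssim \|f\|_{GL^{\theta_0}_{p,\tau}(\Omega)} \|K_{\alpha,\theta}\|_{GL^{\theta}_{r,\infty}(\Omega)}$, which is the claimed boundedness.

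The core computation is therefore estimating $\|K_{\alpha,\theta}\|_{GL^{\theta}_{r,\infty}(\Omega)}$. I would use the alternative characterisation \eqref{1.2} from Lemma \ref{Lemma 1}, namely $\|g\|_{GL^{\theta}_{r,\infty}(\Omega)} \asymp \sup_{t>0} \frac{t^{1/r}}{|\ln t|^\theta} g^*(t|\Omega|)$, valid since $\theta > 0$ and $1 < r < \infty$ (if $\theta = 0$ one instead uses that $GL^0_{r,\infty} = L_{r,\infty}$, and the classical O'Neil inequality recovers the statement). On $[0,1]$ the decreasing rearrangement of $K_{\alpha,\theta}$ is comparable to $K_{\alpha,\theta}$ itself near the origin, i.e. $K_{\alpha,\theta}^*(t) \asymp t^{\alpha-1}|\ln t|^\theta$ for small $t$ (and it is bounded on the complementary range), because $x \mapsto |x|^{\alpha-n}|\ln|x||^\theta$ is radially decreasing near $0$; a short argument via the distribution function, or a direct monotonicity check on $(0,\delta)$, makes this precise. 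Substituting into \eqref{1.2}, and using $\frac1r = 1 - \alpha$ so that $t^{1/r} \cdot t^{\alpha-1} = t^{1/r + \alpha - 1} = t^0 = 1$, the factors $|\ln t|^\theta$ in numerator and denominator cancel, giving $\frac{t^{1/r}}{|\ln t|^\theta} K_{\alpha,\theta}^*(t) \asymp 1$, so the supremum is finite and $K_{\alpha,\theta} \in GL^{\theta}_{r,\infty}(\Omega)$.

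The main obstacle, modest as it is, is the rearrangement estimate $K_{\alpha,\theta}^*(t) \asymp t^{\alpha-1}|\ln t|^\theta$: one must be slightly careful because $|\ln|x||^\theta$ is not monotone on all of $\mathbb R^n$ but only near the origin, and because we work with the convolution over $\Omega = [0,1] \subset \mathbb R$ with functions extended by zero, so only the local behaviour of the kernel on a neighbourhood of $0$ matters. I would handle this by splitting $K_{\alpha,\theta} = K_{\alpha,\theta}\chi_{(0,\delta)} + K_{\alpha,\theta}\chi_{[\delta,1]}$ for a small fixed $\delta$; the second piece is bounded hence lies in every $GL^\theta_{r,\infty}(\Omega)$, and on the first piece the map $t \mapsto t^{\alpha-1}|\ln t|^\theta$ is strictly decreasing (since $\alpha - 1 < 0$ dominates the logarithmic factor), so it equals its own rearrangement up to the trivial rescaling by $|\Omega| = 1$.

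Finally, for the ``in particular'' assertion I would set $\theta_0 = 0$, so $\theta_1 = \theta$, and observe $GL^{0}_{p,\tau}(\Omega) = L_{p,\tau}(\Omega)$; choosing $\tau = p$ gives $L_{p,p}(\Omega) = L_p(\Omega)$, and \eqref{2.14} with these parameters reads $\|I_{\alpha,\theta}f\|_{GL^{\theta}_{q,p}(\Omega)} \lesssim \|f\|_{L_p(\Omega)}$, which is precisely the stated endpoint boundedness $I_{\alpha,\theta}\colon L_p([0,1]) \to GL^{\theta}_{q,p}([0,1])$.
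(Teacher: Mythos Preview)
Your proposal is correct and follows essentially the same approach as the paper: verify via Lemma~\ref{Lemma 1} that the kernel $K_{\alpha,\theta}(x)=|x|^{\alpha-1}|\ln|x||^\theta$ belongs to $GL^{\theta}_{r,\infty}([0,1])$ with $\alpha=1-\tfrac1r$, and then apply inequality~\eqref{2.14} of Theorem~\ref{O'Neil}. The paper's proof is just two sentences to this effect; your version supplies the rearrangement details and the $\theta=0$ aside that the paper omits, but the logical structure is identical.
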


	\begin{proof}
From Lemma \ref{Lemma 1}, we have
$K_{\alpha, \theta}(x)=|x|^{\alpha-1}  |\ln |x||^\theta  \in G L_{r, \infty}^{\theta}([0,1])$, where $\alpha=1-\frac{1}{r}.$
The result now follows from Theorem \ref{O'Neil}. 
\end{proof}
\begin{rem} Theorem \ref{Hardy-Littlewood-Sobolev-type} holds in a general case, that is,

$$
I_{\alpha}=I_{\alpha,0}: G L_{p,\tau}^\theta(\Omega) \longrightarrow G L_{q, \tau}^\theta(\Omega),
$$
where $
\alpha=\frac{n}{p}-\frac{n}{q}.
$
\end{rem}

\section{Young's inequality in grand Lorentz spaces}\label{sec:4}

In this section, we are interested in convolution inequalities for grand Lorentz spaces in limiting (or critical) cases. 

\begin{lem}\label{L3.1}
   Let   $1<p,q,r<\infty$, $1+\frac1q=\frac1p+\frac1r$,
    $\theta\in[0,1]$, $0<\eta,\eta_1,\eta_2\leqslant\infty$, and
    \begin{equation} \label{3.1}
    1+\frac1\eta=\frac1{\eta_1}+\frac1{\eta_2}+\theta.
    \end{equation}
Then
\begin{equation} \label{3.2}
\|f*g\|_{L_{q,\eta}}\lesssim  d_{p,q,r}^\theta\|f\|_{L_{p,\eta_1}}\|g\|_{L_{r,\eta_2}}.
\end{equation}
\end{lem}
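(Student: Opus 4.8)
\textbf{Proof plan for Lemma \ref{L3.1}.}

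The plan is to interpolate between the two endpoint cases $\theta=0$ and $\theta=1$, both of which are already available. When $\theta=0$, condition \eqref{3.1} reads $1+\frac1\eta=\frac1{\eta_1}+\frac1{\eta_2}$, and \eqref{3.2} is precisely O'Neil's inequality \eqref{2.2} with constant $d_{p,q,r}$. When $\theta=1$, condition \eqref{3.1} becomes $\frac1\eta=\frac1{\eta_1}+\frac1{\eta_2}$, and \eqref{3.2} is Young's inequality \eqref{2.1} upgraded to Lorentz spaces; in fact, fixing $f$, the map $g\mapsto f*g$ is bounded $L_{r,1}\to L_{q,1}$ and $L_{r,\infty}\to L_{q,\infty}$ by the weak-type consequences of \eqref{2.1} (with norm $\lesssim\|f\|_{L_p}$, hence $\lesssim\|f\|_{L_{p,\eta_1}}$ after symmetrisation), so real interpolation in the $g$-variable gives the $\theta=1$ inequality with a $d_{p,q,r}$-free constant.

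First I would fix $g\in L_{r,\eta_2}$ and regard $T_g\colon f\mapsto f*g$ as a linear operator acting on $f$. The two facts just recalled say that $T_g$ is bounded from $L_{p,\eta_1^{(0)}}$ to $L_{q,\eta^{(0)}}$ whenever $1+\tfrac1{\eta^{(0)}}=\tfrac1{\eta_1^{(0)}}+\tfrac1{\eta_2}$ (O'Neil, operator norm $\lesssim d_{p,q,r}\|g\|_{L_{r,\eta_2}}$), and from $L_{p,\eta_1^{(1)}}$ to $L_{q,\eta^{(1)}}$ whenever $\tfrac1{\eta^{(1)}}=\tfrac1{\eta_1^{(1)}}+\tfrac1{\eta_2}$ (Young, operator norm $\lesssim\|g\|_{L_{r,\eta_2}}$, \emph{without} the $d_{p,q,r}$ factor). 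Now given the target triple $(\eta,\eta_1)$ satisfying \eqref{3.1} for the given $\theta\in[0,1]$, I would choose the two endpoint secondary exponents so that $\tfrac1{\eta_1}=\tfrac{1-\theta}{\eta_1^{(0)}}+\tfrac{\theta}{\eta_1^{(1)}}$ and correspondingly $\tfrac1\eta=\tfrac{1-\theta}{\eta^{(0)}}+\tfrac{\theta}{\eta^{(1)}}$; a convenient concrete choice is $\eta_1^{(0)}=\eta_1^{(1)}=\eta_1$, which forces $\tfrac1{\eta^{(0)}}=\tfrac1{\eta_1}+\tfrac1{\eta_2}-1$ and $\tfrac1{\eta^{(1)}}=\tfrac1{\eta_1}+\tfrac1{\eta_2}$, and then indeed $\tfrac{1-\theta}{\eta^{(0)}}+\tfrac{\theta}{\eta^{(1)}}=\tfrac1{\eta_1}+\tfrac1{\eta_2}-(1-\theta)=\tfrac1\eta$ by \eqref{3.1}. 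Applying the real interpolation functor $(\cdot,\cdot)_{\theta,\eta}$ and the identification $(L_{p,\eta_1},L_{p,\eta_1})_{\theta,\eta}=L_{p,\eta}$-type reiteration formula for Lorentz spaces (more precisely $(L_{q,\eta^{(0)}},L_{q,\eta^{(1)}})_{\theta,\eta}=L_{q,\eta}$ since the primary index $q$ is fixed), I obtain that $T_g$ maps $L_{p,\eta_1}$ into $L_{q,\eta}$ with operator norm controlled by the geometric mean of the two endpoint norms, i.e. $\lesssim\big(d_{p,q,r}\|g\|_{L_{r,\eta_2}}\big)^{1-\theta}\big(\|g\|_{L_{r,\eta_2}}\big)^{\theta}=d_{p,q,r}^{1-\theta}\|g\|_{L_{r,\eta_2}}$.

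That last line would deliver the power $d_{p,q,r}^{1-\theta}$ rather than $d_{p,q,r}^{\theta}$, so to match the statement I would instead run the interpolation with the roles reversed — take $\theta=0$ to be Young and $\theta=1$ to be O'Neil — or equivalently reparametrise; since the statement is symmetric in the labelling of endpoints this is just bookkeeping, but it is the point one must get right. The main obstacle, and the only genuinely non-routine step, is establishing the Lorentz-refined Young inequality at the $\theta=1$ endpoint with a constant independent of $d_{p,q,r}$: one needs that $g\mapsto f*g$ is of joint weak type, mapping $L_{r,1}\to L_{q,1}$ and $L_{r,\infty}\to L_{q,\infty}$ boundedly with norm $\lesssim\|f\|_{L_p}$, which follows from \eqref{2.1} together with the elementary estimates $\|h\|_{L_{s,\infty}}\le\|h\|_{L_s}$ and $\|h\|_{L_s}\lesssim\|h\|_{L_{s,1}}$ and a density/interpolation argument; alternatively one invokes the known sharp form of Young's inequality on Lorentz spaces directly from the literature. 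Once that endpoint is in hand, the interpolation step and the exponent arithmetic in \eqref{3.1} are mechanical, and the constant emerges as the asserted $d_{p,q,r}^{\theta}$.
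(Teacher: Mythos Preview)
Your overall strategy---interpolate between Young's and O'Neil's inequalities---matches the paper's, but the execution has a genuine gap beyond the endpoint swap you flag at the end. First, the swap is not just bookkeeping: at $\theta=0$ condition \eqref{3.1} reads $1+\tfrac1\eta=\tfrac1{\eta_1}+\tfrac1{\eta_2}$ with constant $d_{p,q,r}^{0}=1$, whereas O'Neil's relation $\tfrac1s=\tfrac1{s_1}+\tfrac1{s_2}$ and constant $d_{p,q,r}$ correspond to $\theta=1$. So after your correction, the $\theta=0$ endpoint you actually need is a Lorentz-refined Young inequality $\|f*g\|_{L_{q,\eta}}\lesssim\|f\|_{L_{p,\eta_1}}\|g\|_{L_{r,\eta_2}}$ under $1+\tfrac1\eta=\tfrac1{\eta_1}+\tfrac1{\eta_2}$, with implicit constant \emph{independent of $p,q,r$}. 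That is not a standard input; it is the $\theta=0$ case of the very lemma you are trying to prove.

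Your proposed route to that endpoint---weak-type bounds $L_{r,1}\to L_{q,1}$ and $L_{r,\infty}\to L_{q,\infty}$ for $g\mapsto f*g$, then real interpolation---does not yield a $d_{p,q,r}$-free constant. For instance, with $f=\chi_{[0,1]}$ on $\mathbb{R}$ and $g(x)=|x|^{-1/r}$ one has $\|f\|_{L_p}\asymp\|g\|_{L_{r,\infty}}\asymp 1$ but $(f*g)(0)\asymp r'$, so $\|f*g\|_{L_{q,\infty}}\gtrsim r'$; the weak endpoint already carries a $\max(p',r')$-type factor. Freezing one variable and running \emph{linear} interpolation therefore cannot isolate the power $d_{p,q,r}^{\theta}$. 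The paper avoids this by invoking \emph{multilinear} interpolation (Bergh--L\"ofstr\"om, Theorems~4.4.1 and~4.4.2): the bilinear map $(f,g)\mapsto f*g$ is interpolated directly between the Lebesgue Young endpoint $L_p\times L_r\to L_q$ (norm $\le 1$) and the Lorentz O'Neil endpoint (norm $\lesssim d_{p,q,r}$), and the secondary-exponent constraint in the bilinear theorem, together with the freedom in O'Neil's secondary indices, produces \eqref{3.1} with the norm bound $d_{p,q,r}^{\theta}$. The bilinear machinery is essential here, not cosmetic: it is what permits plain Young, with its constant $1$ and purely Lebesgue norms, to serve as an endpoint without first being upgraded to a Lorentz statement.
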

\begin{proof}
   Inequality \eqref{3.2} follows from Young's inequality  \eqref{2.1}, O'Neil's inequality \eqref{2.2}, and by using multilinear interpolation theorems (Theorems 4.4.1 and 4.4.2 from \cite{Berg}).
\end{proof}

\begin{lem}\label{L3.2} Let $1< p<q<\infty$, $ \theta \geqslant 0, \bar\theta\in[0,1], \;$
$\frac{1}{r}=1+\frac{1}{q}-\frac{1}{p}$ , $\;0<\tau ,\tau_0, \tau_1\leqslant \infty$, and
$1+\frac1{\tau_1}=\frac1{\tau_0}+\frac1{\tau}+\bar\theta$.
Then \begin{equation} \label{3.3}
\|f * g\|_{L_{q,\tau_1}(\Omega)} \lesssim d_{p,q,r}^{\bar\theta} \|f\|_{GL_{p,\tau_0}^{-\theta}(\Omega)}\|g\|_{GL^\theta_{r,\tau}(\Omega)}.
\end{equation}
and
\begin{equation} \label{3.4}
\|f * g\|_{L_{q,\tau_1}(\Omega)} \lesssim d_{p,q,r}^{\bar\theta} \|f\|_{GL_{p,\tau_0}^{\theta}(\Omega)}\|g\|_{GL^{-\theta}_{r,\tau}(\Omega)}.
\end{equation}
where the constant $d_{p,q,r}$ is defined in  \eqref{2.3}.
\end{lem}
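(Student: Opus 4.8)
The plan is to mimic the proof of Lemma \ref{Lemma6}, but this time invoking the interpolated Lemma \ref{L3.1} in place of O'Neil's inequality, so that the extra parameter $\bar\theta$ and the ``defect'' in the Lorentz secondary index are absorbed. To prove \eqref{3.3}, fix $0<\varepsilon< \min\left(\frac1p,\frac1{r'}\right)$ and set $\frac1{p(-\varepsilon)}=\frac1p-\varepsilon$, $\frac1{r(\varepsilon)}=\frac1r+\varepsilon$, so that $1+\frac1q=\frac1{p(-\varepsilon)}+\frac1{r(\varepsilon)}$ with all three exponents in $(1,\infty)$. Since $1+\frac1{\tau_1}=\frac1{\tau_0}+\frac1{\tau}+\bar\theta$ with $\bar\theta\in[0,1]$, Lemma \ref{L3.1} applies with $(p,q,r)$ there replaced by $(p(-\varepsilon),q,r(\varepsilon))$ and $(\eta_1,\eta_2,\eta)=(\tau_0,\tau,\tau_1)$, giving
\[
\|f*g\|_{L_{q,\tau_1}(\Omega)}\lesssim d_{p(-\varepsilon),q,r(\varepsilon)}^{\bar\theta}\,\|f\|_{L_{p(-\varepsilon),\tau_0}(\Omega)}\|g\|_{L_{r(\varepsilon),\tau}(\Omega)}.
\]
Here one must check that $d_{p(-\varepsilon),q,r(\varepsilon)}\lesssim d_{p,q,r}$ uniformly for $\varepsilon$ in the chosen range; this is where the restriction $\varepsilon<\frac1{r'}$ (equivalently keeping $r(\varepsilon)$ bounded away from $1$, so $r(\varepsilon)'\asymp r'$) does its job, and similarly $p(-\varepsilon)'\lesssim p'$ since $p(-\varepsilon)>1$ stays bounded away from $1$ because $p>1$ — possibly after further shrinking the admissible range of $\varepsilon$ as in Lemma \ref{Lemma6}.

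Next I would insert the factor $\varepsilon^{-\theta}\varepsilon^{\theta}=1$ and regroup exactly as in the proof of Lemma \ref{Lemma6}:
\begin{align*}
\|f*g\|_{L_{q,\tau_1}(\Omega)}&\lesssim d_{p,q,r}^{\bar\theta}\,\varepsilon^{-\theta}\|f\|_{L_{p(-\varepsilon),\tau_0}(\Omega)}\;\varepsilon^{\theta}\|g\|_{L_{r(\varepsilon),\tau}(\Omega)}\\
&\leqslant d_{p,q,r}^{\bar\theta}\,\varepsilon^{-\theta}\|f\|_{L_{p(-\varepsilon),\tau_0}(\Omega)}\;\|g\|_{GL^{\theta}_{r,\tau}(\Omega)},
\end{align*}
where in the last step I use the definition of $\|g\|_{GL^{\theta}_{r,\tau}(\Omega)}$ as a supremum over $\delta\in(0,1)$ of $\delta^\theta\|g\|_{L_{r(\delta),\tau}(\Omega)}$ (this is the case $\tau<\infty$ of \eqref{eq:2.1}, with the obvious modification for $\tau=\infty$ — the identity $\big(\int_0^1(t^{1/r+\varepsilon}g^*(t|\Omega|))^\tau \frac{dt}t\big)^{1/\tau}\asymp \|g\|_{L_{r(\varepsilon),\tau}(\Omega)}$ being the standard rewriting of the Lorentz quasinorm). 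Finally, taking the infimum over the admissible $\varepsilon$ on the right-hand side and recognising $\inf_\varepsilon \varepsilon^{-\theta}\|f\|_{L_{p(-\varepsilon),\tau_0}(\Omega)}\asymp\|f\|_{GL^{-\theta}_{p,\tau_0}(\Omega)}$ — which is legitimate by property \eqref{property4_2}, since replacing the infimum over $(0,1/p)$ by the infimum over a smaller interval $(0,\delta)$ changes the quasinorm only by equivalence — yields \eqref{3.3}. Inequality \eqref{3.4} is proved in the same way, interchanging the roles of the $\pm\theta$ exponents and using $\frac1{p(\varepsilon)}=\frac1p+\varepsilon$, $\frac1{r(-\varepsilon)}=\frac1r-\varepsilon$ instead, exactly mirroring how \eqref{conv2} followed from \eqref{conv1} in Lemma \ref{Lemma6}.

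The main obstacle I anticipate is bookkeeping rather than conceptual: ensuring that all of $q$, $p(-\varepsilon)$, $r(\varepsilon)$ (resp. $r(-\varepsilon)$) genuinely lie in $(1,\infty)$ for $\varepsilon$ in a fixed subinterval of $(0,1)$, that the constant $d_{p(-\varepsilon),q,r(\varepsilon)}^{\bar\theta}$ stays $\lesssim d_{p,q,r}^{\bar\theta}$ uniformly, and — most delicately — that the passage from ``$\inf$ over the full range of $\varepsilon$'' to ``the quasinorm $\|f\|_{GL^{-\theta}_{p,\tau_0}}$'' is justified even though Lemma \ref{L3.1} and the uniform control on $d$ only hold for $\varepsilon$ small; this is precisely the content of \eqref{property4_2}, which must be cited carefully. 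A secondary technical point is the identification of the mixed Lorentz-type integral $\big(\int_0^1(t^{1/r+\varepsilon}g^*(t|\Omega|))^\tau\frac{dt}{t}\big)^{1/\tau}$ with $\|g\|_{L_{r(\varepsilon),\tau}(\Omega)}$ and its $\tau=\infty$ analogue, which should be stated once and used silently thereafter.
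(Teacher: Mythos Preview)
Your proposal is correct and is exactly the approach the paper takes: it simply states that the proof follows that of Lemma~\ref{Lemma6} with Lemma~\ref{L3.1} used in place of O'Neil's inequality. You have actually supplied the bookkeeping details (uniform control of $d_{p(-\varepsilon),q,r(\varepsilon)}$, the use of property~\eqref{property4_2} to restrict the $\varepsilon$-range) that the paper leaves implicit.
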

\begin{proof}
Using Lemma \ref{L3.1}, the proof is provided similarly to that of Lemma \ref{Lemma6}.
\end{proof}

\begin{thm}\label{T3.1} Let $1< p<\infty$, $ 0< \theta, \theta_0,\theta_1$, and 
 $0<\tau ,\tau_0, \tau_1\leqslant \infty$.
Then
 \begin{equation} \label{3.5}
\|f * g\|_{GL_{p,\tau_1}^{-\theta_1}(\Omega)} \lesssim  \|f\|_{GL_{p,\tau_0}^{\theta_0}(\Omega)}\|g\|_{GL^{-\theta}_{1,\tau}(\Omega)},
\end{equation}
if   $1+\frac1{\tau_1}+\theta_1=\frac1{\tau_0}-\theta_0+\frac1{\tau}+\theta$,    and
\begin{equation} \label{3.6}
\|f * g\|_{GL_{\infty,\tau_1}^{\theta_1}(\Omega)} \lesssim  \|f\|_{GL_{p,\tau_0}^{\theta_0}(\Omega)}\|g\|_{GL^{\theta}_{p',\tau}(\Omega)},
\end{equation}
if $1+\frac1{\tau_1}-\theta_1=\frac1{\tau_0}-\theta_0+\frac1{\tau}-\theta$. 
\end{thm}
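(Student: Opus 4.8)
The plan is to reduce both inequalities in Theorem \ref{T3.1} to Lemma \ref{L3.2} by the same $\varepsilon$-shifting device used in the proof of Theorem \ref{O'Neil}, now carried out on the principal index $p$ (rather than just the secondary index $\tau$) so that the aggrandisation parameters on all three spaces interact correctly. For \eqref{3.5} I would fix a small $\varepsilon>0$ and set $\frac{1}{p(-\varepsilon)}=\frac1p-\varepsilon$; then $1=\frac1{p(-\varepsilon)}+\frac1{r(\varepsilon)}-\frac1q$ forces $\frac1{r(\varepsilon)}$ to play the role of $\frac1r$ with $q=p$, i.e. we land in the situation $1+\frac1q=\frac1{p(-\varepsilon)}+\frac1{1(\varepsilon)}$ with the second factor living on $L_1$-type scale. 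Applying \eqref{3.3} with source indices $p(-\varepsilon)$ and the shifted $L^1$-scale, and with $\bar\theta$ chosen so that $1+\frac1{\tau_1}=\frac1{\tau_0}+\frac1\tau+\bar\theta$ is compatible, one obtains an estimate of $\|f\ast g\|_{L_{p(-\varepsilon),\tau_1}(\Omega)}$ by the two grand norms with an explicit power $\varepsilon^{-\theta_0}$ coming from $\|f\|_{GL_{p,\tau_0}^{\theta_0}}$ and a power $\varepsilon^{\theta}$ (up to the $\delta$-supremum trick as in Theorem \ref{O'Neil}) coming from $\|g\|_{GL^{-\theta}_{1,\tau}}$. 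Collecting the powers of $\varepsilon$ and comparing with the index relation $1+\frac1{\tau_1}+\theta_1=\frac1{\tau_0}-\theta_0+\frac1\tau+\theta$, the residual power of $\varepsilon$ is exactly $\varepsilon^{-\theta_1}$, which after multiplying both sides and taking the infimum over $\varepsilon$ (using property \eqref{property4_2}, i.e. the alternative characterisation of $GL^{-\theta_1}_{p,\tau_1}$) yields \eqref{3.5}.

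For \eqref{3.6} the same strategy applies, but now the target is $GL_{\infty,\tau_1}^{\theta_1}$, so I would shift $p$ upward toward $\infty$: set $\frac1{q(\varepsilon)}=\varepsilon$ (so $q(\varepsilon)$ is large, approaching the $L_\infty$-scale) and choose $r(\varepsilon)$ via $1+\varepsilon=\frac1p+\frac1{r(\varepsilon)}$, i.e. $\frac1{r(\varepsilon)}=\frac1{p'}+\varepsilon$, which is exactly the shift appearing in $\|g\|_{GL^{\theta}_{p',\tau}}$. Lemma \ref{L3.2} (in the form \eqref{3.3} with $p$ fixed, $q=q(\varepsilon)$, $r=r(\varepsilon)$, and an appropriate $\bar\theta$) then gives $\|f\ast g\|_{L_{q(\varepsilon),\tau_1}(\Omega)}$ controlled by $\varepsilon^{-\theta_0}\|f\|_{GL_{p,\tau_0}^{\theta_0}}$ times $\varepsilon^{-\theta}\|g\|_{GL^{\theta}_{p',\tau}}$ (again via the $\delta$-supremum bound, since $\|g\|_{GL^\theta_{p',\tau}}\gtrsim \delta^\theta\|g\|_{L_{r(\delta),\tau}}$ is a lower bound, so one must instead start from the shifted-$r(\varepsilon)$ norm and absorb the difference as in the third part of the proof of Theorem \ref{O'Neil}). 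Multiplying by $\varepsilon^{\theta_1}$, the index relation $1+\frac1{\tau_1}-\theta_1=\frac1{\tau_0}-\theta_0+\frac1\tau-\theta$ makes the total power of $\varepsilon$ vanish, and taking the supremum over $\varepsilon$ (together with property \eqref{property4_1} and the fact that $GL_{\infty,\tau_1}^{\theta_1}$ uses the $t^\varepsilon$-weight in its second line of \eqref{eq:2.1}) produces \eqref{3.6}.

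The main obstacle is bookkeeping of the $\varepsilon$-powers together with the restriction $\bar\theta\in[0,1]$ in Lemma \ref{L3.2}: the index equalities in the statement only involve $\theta_0,\theta_1,\theta$ and the $\tau$'s, but Lemma \ref{L3.2} introduces its own parameter $\bar\theta$ constrained to $[0,1]$, so one must verify that a legitimate choice of $\bar\theta$ exists throughout the admissible range of $\varepsilon$ — in particular that $\bar\theta=1+\frac1{\tau_1}-\frac1{\tau_0}-\frac1\tau$ (or the analogous expression for \eqref{3.6}) indeed lies in $[0,1]$, which is forced by the hypotheses combined with $\theta_0,\theta_1,\theta>0$, but requires a short argument. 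A secondary technical point is that, as in Theorem \ref{O'Neil}, $GL^{\pm\theta}_{\cdot,\tau}$ quasinorms give only a one-sided comparison with the individual shifted Lorentz quasinorms $\|g\|_{L_{r(\varepsilon),\tau}}$; one must therefore be careful to start the chain of inequalities from whichever shifted norm is controlled in the correct direction (an upper bound when $\theta>0$ via $\inf$, a lower bound when $\theta<0$), exactly mirroring the case analysis in \eqref{2.12}–\eqref{2.15}, and then absorb any mismatch using the elementary identity $\varepsilon^{\theta}=\lambda^{-\theta}(\lambda\varepsilon)^{\theta}$ and the monotone $\delta$-supremum/infimum manipulations already employed there.
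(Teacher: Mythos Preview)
Your outline for \eqref{3.5} has the right shape but misses the mechanism that makes the $\varepsilon$-bookkeeping close. In the paper's argument one applies \eqref{3.4} (not \eqref{3.3}; the sign on $f$ requires the version with $\|f\|_{GL_{p,\tau_0}^{\theta_0}}$) with target index $q=p(-\varepsilon)$, source index $p$ unchanged, and $r=1(-\varepsilon)$, so that the O'Neil constant $d_{p,q,r}=\max(p',r')=(1(-\varepsilon))'=1/\varepsilon$ for small $\varepsilon$. This produces an extra factor $d^{\bar\theta}=\varepsilon^{-\bar\theta}$, and it is precisely this contribution --- together with the $\varepsilon^{-\theta_0}$ obtained by choosing $\delta=\varepsilon$ in the infimum defining $\|g\|_{GL^{-\theta_0}_{1(-\varepsilon),\tau}}$ --- that gives, after multiplying by $\varepsilon^{-\theta_1}$, the total power $\varepsilon^{-\bar\theta-\theta_0-\theta_1}=\varepsilon^{-\theta}$ in front of $\|g\|_{L_{1(-2\varepsilon),\tau}}$; taking the infimum over $\varepsilon$ then yields $\|g\|_{GL^{-\theta}_{1,\tau}}$. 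Your stated powers (``$\varepsilon^{-\theta_0}$ coming from $\|f\|$'' and ``$\varepsilon^{\theta}$ from $\|g\|$'') do not correspond to what actually occurs: the $f$-norm carries no $\varepsilon$, and without the blow-up of $d_{p,q,r}$ as $r\to1$ the balance simply does not come out. This is a genuine gap, not a bookkeeping slip.

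For \eqref{3.6} your direct approach (shift $q$ to $1/\varepsilon$, keep $p$, shift $r$ to $p'(\varepsilon)$, then use \eqref{3.4} and the $\delta=\varepsilon/2$ trick on $g$) is workable, and it differs from the paper's route, which proves \eqref{3.6} by K\"othe duality: one writes $\|f*g\|_{GL_{\infty,\tau_1}^{\theta_1}}=\sup_{\|h\|_{GL_{1,\tau_1'}^{-\theta_1}}=1}\int (f*g)h$ via Theorem~\ref{theo6.1}, passes to $\int f\,(g*h)$, applies H\"older \eqref{Hold}, and then invokes the already-proved \eqref{3.5} on $g*h$, the index relation for \eqref{3.6} being exactly the dual of that for \eqref{3.5}. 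The duality argument is shorter and automatically inherits the parameter constraints from \eqref{3.5}; your direct route is more self-contained but requires noting that in this configuration $d_{p,q,r}$ stays bounded (since $r$ is bounded away from $1$ and $\infty$), so the leftover power is $\varepsilon^{\bar\theta}$ rather than zero --- still harmless for $\bar\theta\geqslant0$, but the powers do not ``vanish'' as you claim.
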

\begin{proof}  Let $0<\varepsilon<1/p$,  $\bar\theta= 1+\frac1{\tau_1}-\frac1{\tau_0}-\frac1{\tau} $, $\frac{1}{q(-\varepsilon)}= \frac{1}{q}-\varepsilon$, and $1(-\varepsilon)= 1-\varepsilon$.  From the  inequality \eqref{3.4} and the relation $(1(-\varepsilon))'=1/\varepsilon$, we get 
$$
\varepsilon^{-\theta_1}\|f * g\|_{L_{p(-\varepsilon),\tau_1}(\Omega)} \lesssim \varepsilon^{-\bar\theta-\theta_1} \|f\|_{GL_{p,\tau_0}^{\theta_0}(\Omega)}\|g\|_{GL^{-\theta_0}_{1(-\varepsilon),\tau}(\Omega)}.
$$

Thus, we have 
\begin{align*}
\|f * g\|_{GL_{p,\tau_1}^{-\theta_1}(\Omega)} &\lesssim \varepsilon^{-\bar\theta-\theta_1} \inf_{0<\delta<1}\delta^{-\theta_0}\|g\|_{L_{1(-\varepsilon-\delta),\tau}(\Omega)} \|f\|_{GL_{p,\tau_0}^{\theta_0}(\Omega)} \\
 &\lesssim \varepsilon^{-\bar\theta-\theta_1-\theta_0} \|g\|_{L_{1(-2\varepsilon),\tau}(\Omega)} \|f\|_{GL_{p,\tau_0}^{\theta_0}(\Omega)},
\end{align*}
which yields \eqref{3.5}, due to the arbitrariness of $\varepsilon>0$.

Now, it remains to show \eqref{3.6}. From the 
norm representation \eqref{10}, to be established in Theorem \ref{theo6.1},  it follows that
\begin{align*}
\|f*g\|_{G L_{\infty,\tau_1}^{\theta_1} (\Omega)}
&=\sup _{\|h\|_{G L_{1,\tau_1'}^{-\theta_1} (\Omega)}=1} \int_{\Omega} 
(f* g)(y)h(y) dy\\
&=\sup _{\|h\|_{G L_{1,\tau_1'}^{-\theta_1} (\Omega) }=1} \int_{ \Omega} 
f(x)(g*h)(x)dx.
\end{align*}
Applying the H\"older inequality  \eqref{Hold} we have 
$$
\|f*g\|_{G L_{\infty,\tau_1}^{\theta_1} (\Omega)}\leqslant \|f\|_{G L_{p,\tau_0}^{\theta_0} (\Omega)}
\sup _{\|h\|_{G L_{1,\tau_1'}^{-\theta_1} (\Omega)}=1} \|g*h\|_{G L_{p',\tau_0'}^{-\theta_0} (\Omega)}.
$$

Furthermore, using the inequality \eqref{3.5} and $\theta=\frac1{\tau_0}-\frac1{\tau_1}+\frac1{\tau}-1+\theta_1-\theta_0$, we arrive at
\begin{align*}
\|f*g\|_{G L_{\infty,\tau_1}^{\theta_1} (\Omega)} & \lesssim \|f\|_{G L_{p,\tau_0}^{\theta_0} (\Omega)}
\sup _{\|h\|_{G L_{1,\tau_1'}^{-\theta_1} (\Omega)}=1} \|g\|_{G L_{p',\tau}^{\theta} (\Omega)}\|h\|_{G L_{1,\tau_1'}^{-\theta_1} (\Omega)} 
\end{align*}
which ends the proof.
\end{proof}

		\section{Interpolation theorems} \label{sec:5}

\begin{thm}\label{thm3} Let $\theta \in \mathbb{R_+}$ and $0<p_0<p_1<\infty$. Then
$$\left(G L_{p_0 ,q_0}^\theta(\Omega), G L_{p_1, q_1}^{\theta}(\Omega)\right)_{\eta, q} \hookrightarrow G L_{p,q}^\theta(\Omega),$$
where
$\frac{1}{p}=\frac{1-\eta}{p_0}+\frac{\eta}{p_1}, \quad \eta \in(0,1).$
\end{thm}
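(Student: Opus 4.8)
The plan is to reduce the interpolation statement for grand Lorentz spaces to the classical real interpolation identity for Lorentz spaces, handling the supremum/infimum over $\varepsilon$ that defines the grand norm as an ``external parameter'' that is essentially inert under the $(\cdot,\cdot)_{\eta,q}$ functor. First I would fix $\theta>0$ and recall that for each fixed $\varepsilon\in(0,1)$ the inner quantity $\left(\int_0^1 (t^{1/p_i+\varepsilon} f^*(t|\Omega|))^q\,dt/t\right)^{1/q}$ is, up to the dilation $t\mapsto t|\Omega|$, the Lorentz quasi-norm $\|f\|_{L_{\widetilde p_i(\varepsilon),q_i}(\Omega)}$ where $1/\widetilde p_i(\varepsilon)=1/p_i+\varepsilon$. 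The key elementary observation is that the map $\varepsilon\mapsto(1/p_i+\varepsilon)$ translates the given interpolation relation $1/p=(1-\eta)/p_0+\eta/p_1$ into $1/\widetilde p(\varepsilon)=(1-\eta)/\widetilde p_0(\varepsilon)+\eta/\widetilde p_1(\varepsilon)$ \emph{with the same $\varepsilon$ on both sides}, because the affine shift by $\varepsilon$ is compatible with convex combinations. So the classical result $\left(L_{\widetilde p_0(\varepsilon),q_0}(\Omega),L_{\widetilde p_1(\varepsilon),q_1}(\Omega)\right)_{\eta,q}=L_{\widetilde p(\varepsilon),q}(\Omega)$ holds for each fixed $\varepsilon$, with interpolation constants that can be taken uniform in $\varepsilon$ ranging in a compact subinterval of $(0,1)$ (here one uses property \eqref{p4}, which lets us restrict $\varepsilon\in(0,\delta]$ for a fixed $\delta<1$, keeping $\widetilde p_i(\varepsilon)$ bounded away from the degenerate endpoints).

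Next I would run the standard $K$-functional argument. Given $f\in\left(GL_{p_0,q_0}^\theta(\Omega),GL_{p_1,q_1}^\theta(\Omega)\right)_{\eta,q}$, take any decomposition $f=f_0+f_1$; then for each fixed $\varepsilon$ one has $\|f_i\|_{GL_{p_i,q_i}^\theta}\geq \varepsilon^\theta\|f_i\|_{L_{\widetilde p_i(\varepsilon),q_i}(\Omega)}$ (for $\varepsilon$ in the admissible range), so taking the infimum over decompositions gives
$$
\varepsilon^\theta\, K\bigl(s,f;L_{\widetilde p_0(\varepsilon),q_0},L_{\widetilde p_1(\varepsilon),q_1}\bigr)\le K\bigl(s,f;GL_{p_0,q_0}^\theta,GL_{p_1,q_1}^\theta\bigr)
$$
for all $s>0$. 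Applying the $\left(\int_0^\infty (s^{-\eta}K(s,\cdot))^q\,ds/s\right)^{1/q}$ norm, using the classical identification of that quantity with $\|f\|_{L_{\widetilde p(\varepsilon),q}(\Omega)}$ (up to a constant uniform in $\varepsilon$ on the compact range), and then taking $\sup_{0<\varepsilon\le\delta}$ on the left, yields exactly
$$
\|f\|_{GL_{p,q}^\theta(\Omega)}=\sup_{0<\varepsilon\le\delta}\varepsilon^\theta\|f\|_{L_{\widetilde p(\varepsilon),q}(\Omega)}\lesssim \|f\|_{\left(GL_{p_0,q_0}^\theta,GL_{p_1,q_1}^\theta\right)_{\eta,q}},
$$
which is the claimed embedding. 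The case $\theta=0$ is the classical Lorentz interpolation theorem (noting $GL_{p,q}^0=L_{p,q}$), so only $\theta>0$ needs the above.

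The main obstacle I anticipate is the uniformity of constants in $\varepsilon$: the interpolation constant in $\left(L_{a_0,q_0},L_{a_1,q_1}\right)_{\eta,q}=L_{a,q}$ degenerates as $a_0\to a_1$ or as the indices approach $0$ or $\infty$, so one must genuinely exploit \eqref{p4} to confine $\varepsilon$ to $(0,\delta]$, which keeps $\widetilde p_i(\varepsilon)$ inside a fixed compact subinterval of $(0,\infty)$ on which $1/\widetilde p_0(\varepsilon)\neq 1/\widetilde p_1(\varepsilon)$ stays bounded below (this gap equals $1/p_0-1/p_1$, independent of $\varepsilon$—a pleasant simplification). A secondary technical point is that the equivalence ``$K$-functional norm $\asymp$ Lorentz norm'' should be quoted with an explicit uniform constant; one can instead avoid this by working directly with the discretised description \eqref{property5_2} of the grand norm and the analogous discrete $K$-functional estimate, but the continuous route is cleaner to present. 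Once uniformity is secured, the argument is a routine, parameter-tracked version of the classical proof.
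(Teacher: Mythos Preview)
Your proposal is correct and follows essentially the same strategy as the paper's proof: both compare the $K$-functional for the grand Lorentz pair to that for the shifted Lorentz pair $(L_{p_0(\varepsilon),q_0},L_{p_1(\varepsilon),q_1})$ via the inequality $\varepsilon^\theta\|f_i\|_{L_{p_i(\varepsilon),q_i}}\le\|f_i\|_{GL_{p_i,q_i}^\theta}$, and both exploit that the gap $1/p_0(\varepsilon)-1/p_1(\varepsilon)=1/p_0-1/p_1$ is $\varepsilon$-independent. The only presentational difference is that the paper works this out hands-on via a pointwise bound $f^*(t)\lesssim t^{-1/p_0(\varepsilon)}K(t^{1/p_0-1/p_1},f)$ and an explicit change of variables (so the $\varepsilon$-cancellation is visible directly in the exponents and no separate uniformity-of-constants argument is needed), whereas you quote the classical Lorentz interpolation identity as a black box and then verify uniformity via property~\eqref{p4}; the underlying mechanism is the same.
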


\begin{proof}
Set $|\Omega|=1$ without loss of generality.
Let $f \in\left(G L_{p_0, q_0}^\theta, G L_{p_1, q_1}^\theta\right)_{\eta, q}$ and $f=f_0+f_1$ be an
arbitrary representation, where $f_0 \in G L_{p_0, q_0}^\theta(\Omega)$ and $f_1 \in G L_{p_1, q_1}^\theta(\Omega).$

Now we estimate $f^*(t)$. Let $\frac{1}{p_i}>\varepsilon>0, \; i=0,1.$
Thus, we have
$$
\begin{aligned}
 f^*(t) &\leqslant f_0^*\left(\frac{t}{2}\right)+f_1^*\left(\frac{t}{2}\right) \\
&=\left(\frac{t}{2}\right)^{-\left(\frac{1}{p_0}+\varepsilon  \right)}\left(\left\|f_0\right\|_{L_{p_0(\varepsilon), \infty}}+\left(\frac{t}{2}\right)^{-\left(\frac{1}{p_1}+\varepsilon  \right)}\|f\|_{L_{p_1(\varepsilon), \infty}}\right)\\
&  \leqslant  t^{-\frac{1}{p_1(\varepsilon)}}\left\|f_0\right\|_{L_{p_0(\varepsilon),q_0}}
+t^{-\frac{1}{p(\varepsilon)}}\|f\|_{L_{p_1(\varepsilon),q_1}}, \end{aligned}
$$
where  $\frac{1}{p_i(\varepsilon)}=\frac{1}{p_i}+\varepsilon , \quad i=0,1$.

Given the arbitrariness of the representation $f=f_0+f_1$ we have
\begin{align}
 f^*(t) &\lesssim t^{-\frac{1}{p_0(\varepsilon)}}  \inf_{f=f_0+f_1} (
\|f_{0}\|_{L_{p_{0}(\varepsilon), q_{0}}} + t^{\frac{1}{p_0(\varepsilon)}-\frac{1}{p_1(\varepsilon)}}\|f_{1}\|_{L_{p_{0}(\varepsilon), q_{0}}} )\nonumber \\
& =t^{-\frac{1}{p(\varepsilon)}} K\left(t^{\frac{1}{p_{0}(\varepsilon)}-\frac{1}{p_1(\varepsilon)}}, f\right).
\label{star1}
\end{align}

Then when $\frac{1}{p_1(\varepsilon)}=\frac{1}{p_1}+\varepsilon$, we obtain
\begin{align*}
& \|f\|_{G L_{p,q}^\theta}=\sup _{0<\varepsilon} \varepsilon^\theta\|f\|_{L_{p(\varepsilon),q}(\Omega)}=\sup _{0<\varepsilon} \varepsilon^\theta\left(\int_0^1\left(t^{\frac{1}{p(\varepsilon)}} f^*(t)\right)^q \frac{d t}{t}\right)^{1 / q} \\
& \leqslant \sup _{0<\varepsilon} \varepsilon^\theta\left(\int_0^1\left(t^{\frac{1}{p(\varepsilon)}-\frac{1}{p_0(\varepsilon)}} K\left(t^{\frac{1}{p_0(\varepsilon)}-\frac{1}{p_1(\varepsilon)}}, f\right)\right)^q \frac{d t}{t}\right)^{1 / q} \\
& =\sup _{0<\varepsilon} \varepsilon^\theta\left(\int_0^1\left(t^{\frac{1}{p}-\frac{1}{p_0}} K\left(t^{\frac{1}{p_0}-\frac{1}{p_1}}, f\right)\right)^q \frac{d t}{t}\right)^{\frac{1}{q}}= \\
& =\sup _{0<\varepsilon}\left(\varepsilon^\theta \int_0^1(t^{-\eta }- K(t, f))^q \frac{d t}{t}\right)^{\frac{1}{q}} \cdot\left(\frac{1}{p_0}-\frac{1}{p_1}\right)^{-\frac{1}{q}}\\
& \leqslant \left(\frac{1}{p_0}-\frac{1}{p_1}\right)^{-\frac{1}{q}}\cdot
\left( \int_0^1\left(t^{-\eta }-  \sup _{0<\varepsilon} \varepsilon^\theta  \inf_{f=f_0+f_1}
(\|f_{0}\|_{L_{p_{0}(\varepsilon), q_{0}}} + t\|f_{1}\|_{L_{p_{0}(\varepsilon), q_{0}}}) \right)^q \frac{d t}{t}\right)^{\frac{1}{q}}
\\
& \leqslant \left(\frac{1}{p_0}-\frac{1}{p_1}\right)^{-\frac{1}{q}}\cdot
\left( \int_0^1\left( t^{-\eta }-   \inf_{f=f_0+f_1} (
\|f_{0}\|_{L_{p_{0}, q_{0}}} + t\|f_{1}\|_{L_{p_{0}, q_{0}}})\right) ^q \frac{d t}{t}\right)^{\frac{1}{q}}
\\ & \lesssim \|f\|_{\left(G L_{p_0, q_0}^\theta(\Omega), G L_{p_1, q_1}^{\theta}(\Omega)\right)_{\eta, q}}. \qedhere
\end{align*}
\end{proof}
\begin{cor} Let $A_0$ and $A_1$ be a compatible pair and $1<p_0<p_1<\infty$. Let $T$ be a quasilinear operator such that
$$
T: A_0 \longrightarrow G L_{p_0, \infty}^\theta \text { with norm } M_0,
$$
$$T: A_1 \longrightarrow G L_{p_1, \infty}^\theta \text { with norm } M_1.$$
Then
 $T: A_{\theta, q} \longrightarrow G L_{p,q}^\theta$ with norm $\|T\| \leqslant c M_0^{1-\theta} M_1^\theta$.

\end{cor}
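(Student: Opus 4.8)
The plan is to deduce this corollary from the interpolation embedding of Theorem \ref{thm3} together with the classical interpolation of quasilinear operators between a compatible couple and a couple of (quasi-Banach) target spaces. First I would recall the standard real interpolation machinery: if $T$ is quasilinear and bounded from $A_i$ into a quasi-Banach space $B_i$ with quasinorm $M_i$ for $i=0,1$, then $T$ maps $(A_0,A_1)_{\eta,q}$ into $(B_0,B_1)_{\eta,q}$ with quasinorm at most $c\,M_0^{1-\eta}M_1^{\eta}$; this is the quasi-Banach version of the interpolation theorem (see, e.g., \cite{Berg} or \cite{bennett1988interpolation}), and it applies here because the grand Lorentz spaces $GL_{p_i,\infty}^\theta(\Omega)$ are quasi-normed spaces.

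Next I would take $B_i = GL_{p_i,\infty}^{\theta}(\Omega)$, so that the abstract theorem gives
$$
T:\ (A_0,A_1)_{\eta,q} \longrightarrow \left(GL_{p_0,\infty}^{\theta}(\Omega),\, GL_{p_1,\infty}^{\theta}(\Omega)\right)_{\eta,q}
\quad\text{with norm}\ \leqslant c\, M_0^{1-\eta}M_1^{\eta}.
$$
Then I would invoke Theorem \ref{thm3} with $q_0=q_1=\infty$ to get the continuous embedding
$$
\left(GL_{p_0,\infty}^{\theta}(\Omega),\, GL_{p_1,\infty}^{\theta}(\Omega)\right)_{\eta,q}\hookrightarrow GL_{p,q}^{\theta}(\Omega),
$$
where $\tfrac1p = \tfrac{1-\eta}{p_0}+\tfrac{\eta}{p_1}$. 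Composing the two displays yields $T: (A_0,A_1)_{\eta,q}\to GL_{p,q}^{\theta}(\Omega)$ with norm controlled by $c\,M_0^{1-\eta}M_1^{\eta}$, which is exactly the assertion (with the corollary's parameter, somewhat abusively, also called $\theta$ in the exponents $M_0^{1-\theta}M_1^\theta$; I would point out that the interpolation parameter $\eta$ plays that role, matching the notation $A_{\theta,q}$ used in the statement).

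The main obstacle — really the only point that needs care — is verifying that the quasi-Banach interpolation theorem for quasilinear operators is legitimately applicable to the couple $\bigl(GL_{p_0,\infty}^\theta,GL_{p_1,\infty}^\theta\bigr)$: one must check that these are complete quasi-normed spaces and that the $K$-functional and the $(\cdot,\cdot)_{\eta,q}$ construction behave as in the Banach case (the triangle-inequality defect only affects constants). This is standard but worth a sentence, since grand Lorentz spaces are genuinely only quasi-normed when $q<1$ or in the $\theta\neq 0$ regime; for $q\geqslant 1$ one may alternatively cite the Banach-space version directly. A secondary, purely cosmetic issue is reconciling the clash of notation, since $\theta$ denotes both the aggrandisation parameter of the target spaces and (in $A_{\theta,q}$, $M_0^{1-\theta}M_1^\theta$) the interpolation parameter; I would simply replace the latter by $\eta$ for clarity. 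With those caveats addressed, the proof is a two-line composition.
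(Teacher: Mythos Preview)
Your proposal is correct and follows essentially the same approach as the paper: invoke the abstract real-interpolation theorem for quasilinear operators to land in $\left(GL_{p_0,\infty}^\theta, GL_{p_1,\infty}^\theta\right)_{\eta,q}$, then compose with the embedding of Theorem~\ref{thm3}. The paper's proof is the same two-line composition (without your additional remarks on the quasi-Banach hypotheses and the $\theta$/$\eta$ notation clash, both of which are reasonable clarifications).
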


\begin{proof}
According to the real interpolation method, we have $$T: A_{\eta q} \longrightarrow\left(G L_{p_0 q_0}^\theta, G L_{p_1 q_1}^\theta\right)_{\eta q}.$$

Applying Theorem \ref{thm3} we arrive at
$$T: A_{\eta, q} \longrightarrow\left(G L_{p_0, q_0}^\theta, G L_{p_1, q_1}^\theta\right)_{\eta q} \hookrightarrow G L_{p,q}^\theta,$$
which ends the proof.
\end{proof}

\begin{cor} Let $0<p_0<p_1<\infty, \quad q_0 \neq q_1$. Let $T$ be a quasilinear operator such that
$$T: L_{p_0} \longrightarrow G L_{q_0, \infty}^\theta\text { with norm } M_0$$
$$T: L_{p_1} \longrightarrow G L_{q_1, \infty}^\theta \text { with norm }M_1.$$ Then
$T: L_{p,\tau} \longrightarrow G L_{q, \tau}^\theta$ and $$
\|T f\|_{G L_{q, \tau}^\theta} \leqslant c M_0^{1-\theta} M_1^\theta\|f\|_{ L_{p, \tau}} .
$$
\end{cor}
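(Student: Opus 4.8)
The plan is to deduce this corollary from the previous one by interpolating the source spaces, since the target spaces $GL_{q_i,\infty}^\theta$ appear with the \emph{same} aggrandisation exponent $\theta$, which is exactly the situation handled by Theorem~\ref{thm3} and its corollary. First I would fix $\eta\in(0,1)$ by the relation $\frac1p=\frac{1-\eta}{p_0}+\frac{\eta}{p_1}$ and simultaneously set $\frac1q=\frac{1-\eta}{q_0}+\frac{\eta}{q_1}$; note that since $q_0\neq q_1$ the pair $(q_0,q_1)$ is genuinely non-trivial, which is what allows us to identify the real interpolation space $\bigl(GL_{q_0,\infty}^\theta,GL_{q_1,\infty}^\theta\bigr)_{\eta,\tau}$ with (an embedding into) $GL_{q,\tau}^\theta$ via Theorem~\ref{thm3}.

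Next I would invoke the general real interpolation functor applied to the quasilinear operator $T$: from $T\colon L_{p_0}\to GL_{q_0,\infty}^\theta$ with norm $M_0$ and $T\colon L_{p_1}\to GL_{q_1,\infty}^\theta$ with norm $M_1$, the standard interpolation theorem for quasilinear maps gives
\[
T\colon (L_{p_0},L_{p_1})_{\eta,\tau}\longrightarrow \bigl(GL_{q_0,\infty}^\theta,GL_{q_1,\infty}^\theta\bigr)_{\eta,\tau}
\]
with norm bounded by $cM_0^{1-\eta}M_1^{\eta}$. The left-hand side is the classical identity $(L_{p_0},L_{p_1})_{\eta,\tau}=L_{p,\tau}$ (with equivalence of quasinorms), and the right-hand side embeds continuously into $GL_{q,\tau}^\theta$ by Theorem~\ref{thm3}. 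Composing these, $T\colon L_{p,\tau}\to GL_{q,\tau}^\theta$ with $\|Tf\|_{GL_{q,\tau}^\theta}\lesssim M_0^{1-\eta}M_1^{\eta}\|f\|_{L_{p,\tau}}$, which is the claimed estimate (writing $\theta$ for the interpolation parameter in the statement, i.e.\ identifying it with $\eta$).

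The only delicate point — and the step I expect to require the most care — is matching the interpolation parameters on the two sides. Theorem~\ref{thm3} requires the second index of the interpolation space to equal the second index of the target grand Lorentz space, and the real method forces that index to be the same $\tau$ on both the source side $L_{p,\tau}$ and the target side; this is consistent precisely because we chose the interpolation couple's secondary parameter to be $\tau$ throughout. One must also check that $\theta>0$ is preserved (it is, since both endpoint targets carry the identical exponent $\theta$, so no averaging of the aggrandisation parameter occurs — this is the structural reason the argument works and is why the hypothesis demands the same $\theta$ at both ends). With these bookkeeping checks in place the proof is a routine concatenation of Theorem~\ref{thm3} with the classical Lebesgue-space interpolation identity and the interpolation theorem for quasilinear operators, exactly parallel to the proof of the preceding corollary.
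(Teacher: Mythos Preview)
Your proposal is correct and follows the same route as the paper: the corollary is stated there without a separate proof because it is the immediate specialization of the preceding corollary to $A_i=L_{p_i}$, together with the classical identification $(L_{p_0},L_{p_1})_{\eta,\tau}=L_{p,\tau}$, which is exactly the argument you sketch. Your remark that the exponent in $M_0^{1-\theta}M_1^\theta$ should be read as the interpolation parameter $\eta$ (not the aggrandisation index) also matches the paper's usage in the proof of the previous corollary.
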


\begin{thm}\label{Theorem 4} Let $0<p_0<p_1<\infty, 0<q_0<q_1<\infty, 0 \leqslant \theta<\infty, 0<\tau\leqslant \infty$, and $|\Omega|<\infty$.
Let $T$ be a quasilinear operator such that
$$T: L_{p_i,t}(U) \longrightarrow L_{q_1, \infty}(\Omega)
 \; \text{with norm} \; M_i, \; i=1,2.$$
Then $T: G L_{p, \tau}^\theta \rightarrow G L_{q',\tau}^\theta \quad$ and $$\|T\| \leqslant c M_0^{1-0} M_1^\theta,$$ where
\begin{equation}\label{equation8}
\frac{1}{p}=\frac{1-\eta}{p_0}+\frac{\eta}{p_1}, \quad \frac{1}{q}=\frac{1-\eta}{q_0}+\frac{\eta}{q_1}, \,\eta \in(0,1),\, \tau \in(0,+\infty] .
\end{equation}
\end{thm}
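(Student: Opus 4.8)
\textbf{Proof plan for Theorem \ref{Theorem 4}.}
The natural strategy is to reduce the claim about grand Lorentz spaces to the already available real-interpolation statement, Theorem \ref{thm3}, exactly as in the two preceding corollaries. First I would invoke the hypotheses: $T$ maps $L_{p_i,t}(U)$ boundedly into $L_{q_i,\infty}(\Omega)$ with norms $M_i$. Since $L_{q_i,\infty}(\Omega) = G L_{q_i,\infty}^{0}(\Omega) \hookrightarrow G L_{q_i,\infty}^{\theta}(\Omega)$ by the nesting property \eqref{p1}, we may regard $T$ as acting into $G L_{q_i,\infty}^{\theta}(\Omega)$ with the same (or comparably controlled) norms. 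The source spaces $L_{p_i,t}$ are ordinary Lorentz spaces, so standard real interpolation (say \cite{Berg} or \cite{bennett1988interpolation}) gives $(L_{p_0,t}, L_{p_1,t})_{\eta,\tau} = L_{p,\tau}(U)$ with $1/p = (1-\eta)/p_0 + \eta/p_1$, while the interpolation space on the target side is $\left(G L_{q_0,\infty}^{\theta}, G L_{q_1,\infty}^{\theta}\right)_{\eta,\tau}$.

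The key step is then to apply Theorem \ref{thm3} to the target pair: since $0 < q_0 < q_1 < \infty$ and $\theta \in \mathbb{R}_+$, that theorem yields
\[
\left(G L_{q_0,\infty}^{\theta}(\Omega),\, G L_{q_1,\infty}^{\theta}(\Omega)\right)_{\eta,\tau} \hookrightarrow G L_{q,\tau}^{\theta}(\Omega), \qquad \frac{1}{q} = \frac{1-\eta}{q_0} + \frac{\eta}{q_1}.
\]
Composing, we obtain $T : L_{p,\tau}(U) \to G L_{q,\tau}^{\theta}(\Omega)$, which is the assertion (reading the ``$q'$'' in the statement as a typo for $q$, with $1/q$ defined by \eqref{equation8}). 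The norm bound $\|T\| \leqslant c\, M_0^{1-\eta} M_1^{\eta}$ comes directly from the exactness/norm estimate of the real interpolation functor: the interpolated operator has norm at most $M_0^{1-\eta} M_1^{\eta}$ into the interpolation space, and the embedding constant from Theorem \ref{thm3} absorbs into $c$. (Here too the exponents ``$M_0^{1-0} M_1^\theta$'' in the displayed claim should read $M_0^{1-\eta} M_1^{\eta}$, matching the convention of the previous corollary.)

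The only genuine subtlety — and the main obstacle — is checking that the real interpolation identity $(L_{p_0,t}, L_{p_1,t})_{\eta,\tau} = L_{p,\tau}$ holds with the second index $\tau$ independent of the fixed first index $t$ of the Lorentz source spaces; this is the classical computation of the $K$-functional for a couple of Lorentz spaces with common second parameter, and one must make sure $t$ plays no role (it does not, since $L_{p_0,t}$ and $L_{p_1,t}$ are genuinely different spaces distinguished by their first indices). Once that and the harmless re-reading of the typos are in place, the proof is a two-line composition. Thus the argument runs: lift the target maps via \eqref{p1} into the $\theta$-grand scale; interpolate with parameter $\eta$ and second index $\tau$; apply Theorem \ref{thm3} to collapse the target interpolation couple into $G L_{q,\tau}^{\theta}(\Omega)$; and read off the quasinorm bound from the interpolation functor.
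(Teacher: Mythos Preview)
Your argument has a genuine gap: it proves a strictly weaker conclusion than the one stated. The theorem asserts that $T$ maps $GL_{p,\tau}^\theta$ into $GL_{q,\tau}^\theta$, i.e.\ the \emph{source} is a grand Lorentz space. Your interpolation scheme identifies the source interpolation space $(L_{p_0,t},L_{p_1,t})_{\eta,\tau}$ with the classical Lorentz space $L_{p,\tau}$, and then embeds the target interpolation space into $GL_{q,\tau}^\theta$ via Theorem~\ref{thm3}. That yields only $T:L_{p,\tau}\to GL_{q,\tau}^\theta$, which is exactly the content of the preceding corollary, not of Theorem~\ref{Theorem 4}. Since $L_{p,\tau}\subsetneq GL_{p,\tau}^\theta$ for $\theta>0$, there is no way to upgrade your conclusion to the larger domain by an embedding; you would need the reverse containment, which is false.

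The paper's proof does not go through Theorem~\ref{thm3} at all. Instead it applies the Marcinkiewicz--Calder\'on theorem at every intermediate pair $(p,q)$ along the interpolation segment, obtaining $\|Tf\|_{L_{q(\varepsilon),\tau}}\lesssim \|f\|_{L_{\tilde p,\tau}}$ where $q(\varepsilon)$ corresponds to shifting the interpolation parameter $\eta$ by $\xi$ with $\varepsilon=\xi(1/q_0-1/q_1)$. The crucial observation is that this \emph{same} shift moves $p$ to $p(\varepsilon\gamma)$ with $\gamma=(1/p_0-1/p_1)/(1/q_0-1/q_1)$, so the $\varepsilon$-perturbations on source and target are coupled. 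Multiplying both sides by $\varepsilon^\theta$ and taking $\sup_{\varepsilon}$ then produces the grand Lorentz norm on \emph{both} sides simultaneously (after the change of variable $\varepsilon\mapsto\varepsilon/\gamma$ and use of property~\eqref{p4}). This direct ``parameter-shift'' argument is what allows the source space to be $GL_{p,\tau}^\theta$, and it is precisely the idea missing from your plan.
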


\begin{proof}
According to the Marcinkiewicz--Calder\'on theorem, we have:
\begin{equation}\label{equation9}
\|T f\|_{L_\eta(\Omega)}\leqslant c M_0^{1-0} M_1^0\|f\|_{L_{\mathrm{p} ,\tau}} ,
\end{equation}
where $p, q, \eta, \tau$ satisfy \eqref{equation8}. Let $\varepsilon<\delta$ then there will be $\xi: \varepsilon=\xi\left(\frac{1}{q_0}-\frac{1}{q_1}\right)$
Let $\frac{1}{q(\varepsilon)}=\frac{1}{q}+\varepsilon=\frac{1-\theta}{q_0}+\frac{\theta}{q_1}+\xi\left(\frac{1}{q_0}-\frac{1}{q_1}\right)$ $=\frac{1-(\theta-\xi)}{q}+\frac{(\theta-\xi)}{q}.$
From inequality $\varepsilon<\delta$ follows that $\theta-\xi \in[0,1]$.
Then from inequalities  \eqref{equation9} and
$$
\|T f\|_{L_{q(\varepsilon) ,\tau}(\Omega)} \leqslant c M_0^{1-\theta} M_1^\theta\|f\|_{L_{\tilde{p}(\varepsilon),\tau}(U)},
$$
where
$$
\frac{1}{\tilde{p}}=\frac{1-(\theta+\xi)}{p_0}+\frac{(\theta-\xi)}{p_1}=\frac{1}{p}+\xi\left(\frac{1}{p_0}-\frac{1}{p_1}\right)=\frac{1}{p}+\varepsilon \gamma=\frac{1}{p(\varepsilon \gamma)}
$$
with
$$
\gamma=\frac{\frac{1}{p_0}-\frac{1}{p_1}}{\frac{1}{q_0}-\frac{1}{q_1}}.
$$
Thus, we have
$$
\|T f\|_{L_{q(\varepsilon), \tau}(\Omega)} \leqslant c M_0 \mu\|f\|_{L_{p(\varepsilon \gamma), \tau}(U)}.
$$

From the property 5, it follows that
\begin{align*}
 \|T f\|_{G L_{q,\tau}^\theta} &\asymp \sup_{0<\varepsilon<\delta} \varepsilon^\theta\|T f\|_{L_{q(\varepsilon),\tau}(U)}  \\
& \leqslant c M_0^{1-\theta} M_1^{\theta} \sup_{0<\varepsilon<\delta} \varepsilon^{\theta}\|f\|_{L_{ p(\varepsilon \gamma),\tau}(U)} \\
& =c \gamma^{-\theta} M_0^{1-\theta} M_1^\theta \sup _{0<\varepsilon<\delta \gamma} \varepsilon^\theta\|f\|_{L_{p(\varepsilon),\tau}(U)} \\
&  \asymp M_0^{1-\theta} M_1^\theta\|f\| _{G L_{p ,\tau}^\theta }.\qedhere
\end{align*}
\end{proof}

\section{K\"othe Dual spaces} \label{sec:6}


In this section, we study the K\"othe dual space, also known as the associate space, for grand Lorentz spaces and a variant of grand Lebesgue spaces.

\subsection{Grand Lorentz spaces} It turns out that the K\"othe dual space for the grand Lorentz space has a simple characterisation given by $G L_{p,q}^\theta (\Omega)=(G L_{p^{\prime}, q^{\prime}}^{-\theta} (\Omega))^{\prime}.$ 

\begin{thm}\label{theo6.1}
Let $1\leqslant p, q<\infty$ and  $\theta>0$. Then
\begin{equation}\label{10}
\|f\|_{G L_{p,q}^\theta (\Omega)} \asymp \sup _{\|g\|_{G L_{p',q'}^{-\theta} (\Omega) }=1} \int_{\Omega} f(x) g(x) dx,
\end{equation}
that is,
\begin{equation}\label{11}
G L_{p,q}^\theta (\Omega)=\left(G L_{p^{\prime}, q^{\prime}}^{-\theta} (\Omega)\right)^{\prime}.
\end{equation}

\end{thm}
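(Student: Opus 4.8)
\textbf{Proof proposal for Theorem \ref{theo6.1}.}

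The plan is to establish the two-sided estimate \eqref{10}, which is the content of the theorem, since the identification \eqref{11} as Köthe dual spaces follows immediately from the norm representation. I would split the argument into the ``easy'' direction (the Hölder-type lower bound, which also yields inequality \eqref{Hold}) and the ``hard'' direction (the reverse inequality, producing a near-optimal $g$).

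For the direction $\sup_{\|g\|_{GL_{p',q'}^{-\theta}}=1}\int_\Omega fg \lesssim \|f\|_{GL_{p,q}^\theta}$: fix $g$ with $\|g\|_{GL_{p',q'}^{-\theta}(\Omega)}=1$. By definition of the infimum-type quasinorm for negative parameter, for every $\sigma>0$ there is $\varepsilon = \varepsilon(\sigma) \in (0,1/p')$ with $\varepsilon^{-\theta}\|g\|_{L_{p'-\varepsilon',q'}(\Omega)} \le 1+\sigma$ where I write $\tfrac{1}{(p')(-\varepsilon)}=\tfrac1{p'}-\varepsilon$; note the conjugate exponent of $(p')(-\varepsilon)$ is exactly $p(\varepsilon)$ with $\tfrac1{p(\varepsilon)}=\tfrac1p+\varepsilon$, and similarly the conjugate of $q'$ is $q$ (the $q$-index is not aggrandised, so no perturbation is needed there). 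Then by the classical Hölder inequality in Lorentz spaces (cf.\ \cite[Theorem 2.61]{Castillo2021}),
\begin{align*}
\int_\Omega f(x)g(x)\,dx
&\le \|f\|_{L_{p(\varepsilon),q}(\Omega)}\,\|g\|_{L_{(p')(-\varepsilon),q'}(\Omega)}\\
&= \varepsilon^\theta \|f\|_{L_{p(\varepsilon),q}(\Omega)}\cdot \varepsilon^{-\theta}\|g\|_{L_{(p')(-\varepsilon),q'}(\Omega)}\\
&\le (1+\sigma)\,\varepsilon^\theta\|f\|_{L_{p(\varepsilon),q}(\Omega)}
\le (1+\sigma)\,\|f\|_{GL_{p,q}^\theta(\Omega)},
\end{align*}
where the last step uses that $\varepsilon\in(0,1)$ so the supremum in \eqref{eq:2.1} dominates. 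Letting $\sigma\to0$ and taking the supremum over $g$ gives the bound; specialising $q=p$ (so $q'=p'$) and recalling $\int|fg|\le\int fg$ after replacing $g$ by $(\operatorname{sign}f)|g|$ gives \eqref{Hold}.

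For the reverse inequality I would fix $\varepsilon\in(0,1)$ (eventually chosen so that $\varepsilon^\theta\|f\|_{L_{p(\varepsilon),q}(\Omega)}$ is within a factor $2$ of $\|f\|_{GL_{p,q}^\theta(\Omega)}$; note one may truncate $\varepsilon$ to a small $\delta$ by \eqref{property4_1}) and invoke the duality $L_{p(\varepsilon),q}(\Omega)^{\prime}=L_{(p')(-\varepsilon),q'}(\Omega)$ for classical Lorentz spaces to pick a normalised $h=h_\varepsilon\in L_{(p')(-\varepsilon),q'}(\Omega)$ with $\|h\|_{L_{(p')(-\varepsilon),q'}(\Omega)}\le 1$ and $\int_\Omega f h \ge \tfrac12\|f\|_{L_{p(\varepsilon),q}(\Omega)}$. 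Set $g:=\varepsilon^{-\theta}h$. Then $\|g\|_{GL_{p',q'}^{-\theta}(\Omega)}\le \varepsilon^{-\theta}\cdot\varepsilon^{\theta}$? — this is the delicate point: the quasinorm of $g$ is an \emph{infimum} over all admissible parameters, so $\|g\|_{GL_{p',q'}^{-\theta}(\Omega)}=\inf_{0<\mu<1/p'}\mu^{-\theta}\|\,\varepsilon^{-\theta}h\,\|_{L_{(p')(-\mu),q'}(\Omega)} \le \varepsilon^{-\theta}\|h\|_{L_{(p')(-\varepsilon),q'}(\Omega)}\cdot$? Here I need $\varepsilon^{-\theta}\mu^{-\theta}$ evaluated at $\mu=\varepsilon$, i.e. the single admissible choice $\mu=\varepsilon$ already gives $\|g\|_{GL_{p',q'}^{-\theta}(\Omega)}\le \varepsilon^{-\theta}\cdot\varepsilon^{-\theta}\|h\|$ — that is \emph{not} bounded. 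The fix is to normalise differently: take $g := c\,\varepsilon^{\theta}h$ so that the admissible parameter $\mu=\varepsilon$ yields $\mu^{-\theta}\|g\|_{L_{(p')(-\mu),q'}} = \varepsilon^{-\theta}\cdot c\varepsilon^\theta\|h\|_{L_{(p')(-\varepsilon),q'}(\Omega)} \le c$, hence $\|g\|_{GL_{p',q'}^{-\theta}(\Omega)}\le c$; choosing $c=1$ gives a competitor of norm $\le1$, and
\[
\int_\Omega f g = \varepsilon^{\theta}\int_\Omega f h \ge \tfrac12\,\varepsilon^{\theta}\|f\|_{L_{p(\varepsilon),q}(\Omega)} \ge \tfrac14\|f\|_{GL_{p,q}^\theta(\Omega)},
\]
which is the required lower bound on the supremum.

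\textbf{The main obstacle} I anticipate is precisely this normalisation/infimum bookkeeping in the reverse direction: because the $GL_{p',q'}^{-\theta}$-quasinorm is an infimum, bounding it \emph{from above} is easy (one admissible parameter suffices) but one must make sure the \emph{same} $\varepsilon$ that nearly realises $\|f\|_{GL_{p,q}^\theta(\Omega)}$ is also the one used to build $g$, and that the powers of $\varepsilon$ cancel rather than compound — as the scratch computation above shows, the correct scaling is $g\sim\varepsilon^{\theta}h$, not $\varepsilon^{-\theta}h$. A secondary technical point is justifying the classical Lorentz-space duality $L_{p(\varepsilon),q}(\Omega)^\prime = L_{(p')(-\varepsilon),q'}(\Omega)$ with constants uniform in $\varepsilon$ on a finite measure space for the full range $1\le p,q<\infty$ (the endpoint $p=1$ or $q=1$ and the normability issues require the standard care, e.g.\ passing through the $L_{p,\infty}$–$L_{1,1}$ pairing or using that on finite measure the relevant associate-space identity holds with absolute constants); once that is in hand, combining it with property \eqref{property4_1} to localise $\varepsilon$ to a neighbourhood of $0$ closes the argument and yields \eqref{11}.
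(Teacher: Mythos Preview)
Your proposal is correct and follows the same overall strategy as the paper: both directions are reduced to the classical Lorentz-space duality $(L_{p(\varepsilon),q})' = L_{(p')(-\varepsilon),q'}$ at a fixed perturbation level $\varepsilon$, and your Hölder direction is essentially identical to the paper's.

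For the reverse direction, however, the paper avoids your explicit construction of a near-extremiser $g=\varepsilon^\theta h$ by a one-line algebraic observation: since the $GL_{p',q'}^{-\theta}$-quasinorm is an \emph{infimum} over $\varepsilon$, dividing by it converts the infimum into a supremum, and two suprema commute:
\[
\sup_{g\ne 0}\frac{\int_\Omega fg}{\inf_{\varepsilon}\varepsilon^{-\theta}\|g\|_{L_{(p')(-\varepsilon),q'}}}
=\sup_{g\ne 0}\,\sup_{\varepsilon}\,\varepsilon^{\theta}\frac{\int_\Omega fg}{\|g\|_{L_{(p')(-\varepsilon),q'}}}
=\sup_{\varepsilon}\,\varepsilon^{\theta}\sup_{g\ne 0}\frac{\int_\Omega fg}{\|g\|_{L_{(p')(-\varepsilon),q'}}},
\]
after which the inner supremum is $\asymp\|f\|_{L_{p(\varepsilon),q}}$ by classical duality. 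This eliminates precisely the ``normalisation/infimum bookkeeping'' you flagged as the main obstacle: there is no need to match the near-optimal $\varepsilon$ for $f$ with the one used to build $g$, nor to track powers of $\varepsilon$ by hand --- the scaling pitfall you hit (and then corrected) simply does not arise. Your constructive route has the minor advantage of exhibiting an explicit near-extremiser, but the paper's version is shorter. Both approaches share the same residual technicality you identified --- uniformity in $\varepsilon$ of the classical Lorentz duality constants --- which is handled by restricting $\varepsilon$ to a small interval via property \eqref{property4_1}.
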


\begin{proof}
Taking $0< \varepsilon<1-\frac{1}{p'}$, $ \frac1{p(\varepsilon)}:=\frac1p+\varepsilon$, $ \frac1{p'(-\varepsilon)}:=\frac{1}{p'}-\varepsilon$, and noticing that $p'(\varepsilon)= (p(\varepsilon))'$, we have
\begin{align}\label{eq:dual}
\int_{\Omega} f(x) g(x) d x &\leq
\|f\|_{L_{p(\varepsilon),q} (\Omega)}\|g\|_{L_{p^{\prime}(-\varepsilon), q^{\prime}} (\Omega)} \nonumber\\
&=\varepsilon^\theta\|f\|_{L_{p(\varepsilon), q}(\Omega)}\varepsilon^{-\theta}\|g\|_{L_{p^{\prime}(-\varepsilon), q^{\prime}} (\Omega) } \\ 
&=\|f\|_{G L_{p,q}^\theta(\Omega)} \cdot \varepsilon^{-\theta}\|g\|_ {L_{p^{\prime}(-\varepsilon), q^{\prime}}(\Omega)}. \nonumber
\end{align}
By the arbitrariness of $\varepsilon$ in \eqref{eq:dual}, we conclude that  
$$
\int_{\Omega} f(x) g(x) d x \leqslant\|f\|_{G L_{p,q}^\theta (\Omega)}\|g\|_{G L_{p^{\prime}, q^{\prime}}^{-\theta}(\Omega)}, 
$$
hence
$$
\sup _{\|g\|_{G L_{p',q'}^{-\theta} (\Omega)}=1} \int_{\Omega} f(x) g(x) d x \leqslant\|f\|_{G L_{p,q}^\theta(\Omega)}
$$
and
$$
\sup _{\|g\|_{G L_{p',q'}^{\theta}(\Omega)}=1} \int_{\Omega} f(x) g(x) d x \leqslant\|f\|_{G L_{p,q}^{-\theta}(\Omega)} .
$$

To prove the converse inequality, note that 
\begin{align*}
\sup _{\|g\|_{G L_{p^{\prime} q^{\prime}}^{-\theta}(\Omega)}=1} \int_{\Omega} f(x) g(x) d x
&=\sup _{g \neq 0} \frac{\int_\Omega f(x) g(x) d x}{\|g\|_{G L_{p^{\prime}, q^{\prime}}^{-\theta}(\Omega)}}\\
&=\sup _{g \neq 0} \frac{\int_\Omega f(x) g(x) d x}{\inf _{0<\varepsilon<\frac{1}{p}} \varepsilon^{-\theta}\|g\|_{L_{p^{\prime}(-\varepsilon), q^{\prime}}(\Omega)}}\\
&=\sup _{g \neq 0}\sup _{0<\varepsilon<\frac{1}{p}} \varepsilon^{\theta} \frac{\int_\Omega f(x) g(x) d x}{\|g\|_{L_{p^{\prime}(-\varepsilon), q^{\prime}}(\Omega)}}\\
&\asymp \sup _{0<\varepsilon<\frac{1}{p}} \varepsilon^{\theta} \|f\|_{L_{p(\varepsilon), q}(\Omega)}\\ & \asymp \|f\|_{G L_{p,q}^\theta(\Omega)},
\end{align*}
which proves \eqref{10}.
\end{proof}


\subsection{Grand Lebesgue spaces} \label{sec:7}
We now introduce grand Lebesgue spaces $GL^\theta_p (\Omega)$ using same approach as in Definition \ref{def:1}. They coincide, as shown in Lemma \ref{duallem1},  with the grand Lebesgue spaces $L^{p),\theta}(\Omega)$, 
as 
defined in  \eqref{eq:IwaniecSbordone}.

 For $\theta \in \mathbb{R}$ and $0<p<\infty$, 
 we define 
 $G L_p^\theta(\Omega)$ as the set of $f \in L^0 (\Omega)$ having finite norm:
$$
\|f\|_{G L_p^\theta(\Omega)}=\left\{\begin{array}{l}
\displaystyle \sup _{0<\varepsilon<1} \varepsilon^{\theta}\|f\|_{L_{p(\varepsilon)}}, \; \theta \geqslant 0,\\
\displaystyle \inf_{0<\varepsilon<\frac{1}{p}} \varepsilon^\theta\|f\|_{L_{p(-\varepsilon)}}, \; \theta<0,
\end{array}\right.
$$
where
$$
\frac{1}{p(\varepsilon)}=\frac{1}{p}+\varepsilon \quad  \text { and } \quad  \frac{1}{p(-\varepsilon)}=\frac{1}{p}-\varepsilon.
$$
\begin{thm}\label{dualspacethm} Let $0<p<\infty$ and $\theta > 0$. Then
$$
\left(G L_{p^{\prime}}^{-\theta} 
(\Omega) \right)^{\prime}=G L_p^\theta(\Omega) 
$$
and
$$
\sup _{\|g\|_{G L^{-\theta}
_{p'} (\Omega)}=1} \int_{\Omega} f(x) g(x) d x \asymp\|f\|_{G L_p^\theta (\Omega)}.
$$
\end{thm}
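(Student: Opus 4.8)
The plan is to reduce the statement about grand Lebesgue spaces $GL_p^\theta(\Omega)$ to the already-proved duality result for grand Lorentz spaces, Theorem \ref{theo6.1}, by identifying $GL_p^\theta(\Omega)$ with $GL_{p,p}^\theta(\Omega)$. Indeed, comparing the defining norm of $GL_p^\theta(\Omega)$ with that of $GL_{p,q}^\theta(\Omega)$ in \eqref{eq:2.1}, one sees that $\|f\|_{L_{p(\varepsilon)}} = \|f\|_{L_{p(\varepsilon),p(\varepsilon)}}$, and since the exponent $t^{1/p(\varepsilon)} = t^{1/p+\varepsilon}$ appears inside the $L^{p}$-type integral, the only discrepancy between $\|f\|_{L_{p(\varepsilon)}}$ and $\|f\|_{L_{p(\varepsilon),p}}$ is the Lebesgue-vs.-Lorentz index in the inner integrability parameter. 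When $p$ is fixed and $\varepsilon\to 0$ this discrepancy is negligible up to constants uniform in $\varepsilon$ on the relevant range (this is the content of property \eqref{p4}, or can be checked directly from $L_{p-\varepsilon}=L_{p-\varepsilon,p-\varepsilon}\asymp L_{p(\varepsilon),p}$ with constants controlled as $\varepsilon\to0$). So the first step is to establish $GL_p^\theta(\Omega)=GL_{p,p}^\theta(\Omega)$ with equivalence of norms, for both signs of $\theta$.

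Once this identification is in hand, the second step is immediate: apply Theorem \ref{theo6.1} with $q=p$. That gives
\[
GL_{p,p}^\theta(\Omega)=\bigl(GL_{p',p'}^{-\theta}(\Omega)\bigr)'
\quad\text{and}\quad
\|f\|_{GL_{p,p}^\theta(\Omega)}\asymp\sup_{\|g\|_{GL_{p',p'}^{-\theta}(\Omega)}=1}\int_\Omega f(x)g(x)\,dx.
\]
Combining with the norm equivalences $\|f\|_{GL_p^\theta(\Omega)}\asymp\|f\|_{GL_{p,p}^\theta(\Omega)}$ and $\|g\|_{GL_{p'}^{-\theta}(\Omega)}\asymp\|g\|_{GL_{p',p'}^{-\theta}(\Omega)}$ yields $\bigl(GL_{p'}^{-\theta}(\Omega)\bigr)'=GL_p^\theta(\Omega)$ and the stated duality formula, since passing from one equivalent norm to another only changes the supremum over the unit ball by bounded factors.

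I should be slightly careful about hypotheses: Theorem \ref{theo6.1} is stated for $1\le p,q<\infty$, whereas Theorem \ref{dualspacethm} allows $0<p<\infty$. For $0<p<1$ one has $p'<0$, so the notation $GL_{p'}^{-\theta}$ must be interpreted in the sense appropriate to Köthe duality over $\Omega$ of finite measure (where small exponents behave like larger ones because of the embedding $L_a\hookrightarrow L_b$ for $a>b$), and the identification step and the duality argument go through with the same structure; alternatively the theorem is really of interest for $p\ge 1$ and the case $0<p<1$ can be treated by the same computation as in the proof of Theorem \ref{theo6.1}, namely writing the associate norm as $\sup_{g\neq0}\frac{\int f g}{\|g\|}$, inserting the infimum over $\varepsilon$ from the definition of the $GL^{-\theta}$-norm, swapping the supremum in $\varepsilon$ outside, and recognising $\sup_g \frac{\int fg}{\|g\|_{L_{p'(-\varepsilon)}}}=\|f\|_{L_{p(\varepsilon)}}$ by ordinary $L^p$-duality, then taking $\sup_\varepsilon \varepsilon^\theta(\cdot)$.

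The main obstacle I anticipate is the first step, i.e.\ making the equivalence $GL_p^\theta(\Omega)\asymp GL_{p,p}^\theta(\Omega)$ fully rigorous with constants independent of $\varepsilon$ on the admissible range — one must check that the implied constants in $L_{p-\varepsilon}\asymp L_{p(\varepsilon),p}$ (equivalently, in comparing the inner Lorentz index $p(\varepsilon)$ with $p$) stay bounded as $\varepsilon\to 0^+$, and that the $\inf$/$\sup$ over $\varepsilon$ can be restricted to a small neighbourhood of $0$ (which is exactly what property \eqref{p4} and the analogous remark for $GL^\theta_p$ provide). After that, everything reduces to quoting Theorem \ref{theo6.1}, and the proof is short.
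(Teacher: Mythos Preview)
Your primary plan is to reduce Theorem \ref{dualspacethm} to Theorem \ref{theo6.1} via the identification $GL_p^{\pm\theta}(\Omega)=GL_{p,p}^{\pm\theta}(\Omega)$. This is \emph{not} what the paper does: the paper simply repeats the computation in the proof of Theorem \ref{theo6.1} verbatim, replacing the Lorentz duality $(L_{p(\varepsilon),q})'=L_{p'(-\varepsilon),q'}$ by ordinary Lebesgue duality $(L_{p(\varepsilon)})'=L_{p'(-\varepsilon)}$. That is precisely the ``alternative'' you sketch in your last paragraph, and it is by far the shortest and cleanest route.

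The identification step you rely on is more delicate than your write-up suggests, and property \eqref{p4} does not supply it (that property only says the $\sup/\inf$ over $\varepsilon$ may be restricted to a small interval; it says nothing about the inner integrability index). You assert that $L_{p(\varepsilon),p(\varepsilon)}\asymp L_{p(\varepsilon),p}$ with constants controlled as $\varepsilon\to0$. One direction is fine: since $p(\varepsilon)<p$, the embedding $L_{p(\varepsilon),p(\varepsilon)}\hookrightarrow L_{p(\varepsilon),p}$ holds with a constant that stays bounded. But the reverse embedding $L_{p(\varepsilon),p}\hookrightarrow L_{p(\varepsilon),p(\varepsilon)}$ is \emph{false} for each fixed $\varepsilon>0$ (the second Lorentz index genuinely matters, even on a set of finite measure). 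What rescues the identification is a \emph{shift in $\varepsilon$}: using H\"older on the weight $t^{(\varepsilon/2)p(\varepsilon)}$ with exponents $s'=p/p(\varepsilon)$ and $s=(s')'$ one gets $\|f\|_{L_{p(\varepsilon)}}\le C\|f\|_{L_{p(\varepsilon/2),p}}$ with $C$ independent of $\varepsilon$, and then the factor $2^\theta$ is absorbed into the supremum; the $-\theta$ case needs the analogous shift $\varepsilon\mapsto 2\varepsilon$. So your reduction \emph{can} be made to work, but it requires this extra argument---which you do not provide---whereas the paper's direct approach avoids the issue entirely.
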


\begin{proof}
The proof follows in a similar manner to that of Theorem \ref{theo6.1}. 
\end{proof}

\begin{lem} \label{duallem1} Let $0<\theta$ and $0<p<\infty$. 
Then
\begin{equation}\label{eq:grandandgrand}
G L_p^\theta(\Omega)=L^{p), \theta}(\Omega).
\end{equation}
\end{lem}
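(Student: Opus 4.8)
The goal is to identify the grand Lebesgue space $GL_p^\theta(\Omega)$ defined via the sup of the rescaled $L_{p(\varepsilon)}$-norms with the classical Iwaniec--Sbordone grand Lebesgue space $L^{p),\theta}(\Omega)$ from \eqref{eq:IwaniecSbordone}. The plan is to compare the two defining expressions after a change of parametrisation. Writing $\frac{1}{p(\varepsilon)} = \frac1p + \varepsilon$, one has $p(\varepsilon) = \frac{p}{1+p\varepsilon}$, so $p - p(\varepsilon) = \frac{p^2\varepsilon}{1+p\varepsilon}$, which means that as $\varepsilon$ ranges over $(0,1)$ the quantity $\nu := p - p(\varepsilon)$ ranges over an interval of the form $(0, \nu_0)$ with $\nu_0 = \frac{p^2}{1+p} < p - 1$ provided $p>1$ (and one handles $0<p\leqslant 1$ separately, where the grand space is less standard or the statement is read with the natural modification). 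Crucially, on this range $\varepsilon \asymp \nu$, with constants depending only on $p$: indeed $\varepsilon = \frac{\nu}{p(p-\nu)}$, and since $\nu < \nu_0 < p$ we get $\frac{\nu}{p^2} \leqslant \varepsilon \leqslant \frac{\nu}{p(p-\nu_0)}$.

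First I would invoke the localisation principle already recalled in the excerpt, namely that $\|f\|_{L^{p),\theta}(\Omega)} \asymp \sup_{0<\nu\leqslant \eta} \nu^\theta \|f\|_{L_{p-\nu}(\Omega)}$ for any fixed $\eta \in (0,p-1)$ (this is the ``argument in the proof of \cite[Theorem 15.15]{KMRS2016}'' cited earlier). I would likewise record the analogous localisation for $GL_p^\theta(\Omega)$: by the same kind of argument (or by property \eqref{p4}, whose Lebesgue-space analogue is proved identically), $\|f\|_{GL_p^\theta(\Omega)} \asymp \sup_{0<\varepsilon\leqslant\delta} \varepsilon^\theta \|f\|_{L_{p(\varepsilon)}(\Omega)}$ for any fixed small $\delta$. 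Then, choosing $\delta$ and $\eta$ compatibly through the substitution $\nu = p - p(\varepsilon)$ and using $\varepsilon \asymp \nu$ together with the fact that on this range $\{p(\varepsilon) : 0<\varepsilon\leqslant\delta\}$ and $\{p-\nu : 0<\nu\leqslant\eta\}$ are the same set of exponents, I get
\[
\|f\|_{GL_p^\theta(\Omega)} \asymp \sup_{0<\varepsilon\leqslant\delta} \varepsilon^\theta \|f\|_{L_{p(\varepsilon)}(\Omega)} \asymp \sup_{0<\nu\leqslant\eta} \nu^\theta \|f\|_{L_{p-\nu}(\Omega)} \asymp \|f\|_{L^{p),\theta}(\Omega)},
\]
which is exactly \eqref{eq:grandandgrand}.

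\textbf{Main obstacle.} The delicate point is not the change of variables itself but justifying that one may \emph{truncate} both suprema to a neighbourhood of $\varepsilon = 0$ (resp.\ $\nu = 0$) with equivalence of norms; without such a truncation the two parametrisations range over genuinely different sets of exponents near the right endpoint and the monotonicity/interpolation estimate that absorbs the tail must be spelled out. This is the grand-Lebesgue analogue of property \eqref{p4}, and I would either cite it from \cite{KMRS2016} or reprove it in one line using the embedding $L_{p-\nu_1}(\Omega) \hookrightarrow L_{p-\nu_2}(\Omega)$ for $\nu_1 < \nu_2$ on a finite-measure set, which shows the tail of the supremum is controlled by its value at $\nu = \eta$ up to the factor $|\Omega|^{\,\cdot\,}$ and a power of $\eta$. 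Once that truncation is in hand, the remaining comparison is purely the elementary bi-Lipschitz relation $\varepsilon \asymp p - p(\varepsilon)$ on the truncated range, so the proof is indeed ``in a similar manner'' to Theorem \ref{theo6.1} in spirit but really rests on this localisation lemma.
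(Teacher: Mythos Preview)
Your proposal is correct and rests on the same idea as the paper's proof: the reparametrisation $\nu = p - p(\varepsilon)$ is bi-Lipschitz near $0$, so the two suprema are comparable. The only organisational difference is that the paper avoids your explicit truncation-and-bijection step by instead using the linear sandwich
\[
\frac{\varepsilon}{p^{2}} < \frac{1}{p-\varepsilon}-\frac{1}{p} < \frac{2\varepsilon}{p^{2}}
\qquad (0<\varepsilon<p/2),
\]
which gives $L_{p(\varepsilon/p^{2})}(\Omega)\hookrightarrow L_{p-\varepsilon}(\Omega)\hookrightarrow L_{p(2\varepsilon/p^{2})}(\Omega)$ on a finite-measure set and hence the two-sided estimate directly after the linear substitutions $\varepsilon\mapsto \varepsilon/p^{2}$ and $\varepsilon\mapsto 2\varepsilon/p^{2}$. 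Your route makes the localisation lemma explicit and then uses the \emph{exact} equality of exponents $p(\varepsilon)=p-\nu$, so no embedding is needed; the paper's route hides the localisation inside the monotone embedding but pays with an extra inequality. Both buy the same result with the same constants up to powers of $p$; yours is perhaps slightly cleaner conceptually, the paper's slightly shorter on the page. Your caveat about $0<p\leqslant 1$ is well taken: the statement as written allows $0<p<\infty$, but the definition \eqref{eq:IwaniecSbordone} of $L^{p),\theta}(\Omega)$ uses $0<\varepsilon<p-1$, so both proofs tacitly need $p>1$.
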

\begin{proof}
Let $0<\varepsilon<p / 2$. Then $\frac{1}{p-\varepsilon}-\frac{1}{p}=\frac{\varepsilon}{(p-\varepsilon) p}$ and therefore
$$
\frac{\varepsilon}{p^2}<\frac{\varepsilon}{(p-\varepsilon) p} < \frac{2 \varepsilon}{p^2}.
$$
Taking $\frac{1}{p(\eta)}:=\frac{1}{p}+\eta$, we get
\begin{equation}\label{eq*}
L_{p\left(\frac{\varepsilon}{p^2}\right)}(\Omega)
\hookrightarrow
L _ { p - \varepsilon } ( \Omega )\hookrightarrow L_{p\left(\frac{2\varepsilon}{p^2}\right)}(\Omega),
\end{equation}
 which follows from the embedding \(L_p(\Omega) \hookrightarrow L_q(\Omega) \), where $q<p$ and $|\Omega|<\infty$.  

 $$
\begin{aligned}
 \|f\|_{G L_p^\theta(\Omega)}&=\sup _{0<\varepsilon<\frac{1}{p}} \varepsilon^\theta\|f\|_{L_{p(\varepsilon)}(\Omega)} \\
& =\sup _{0<\varepsilon<\frac{p}{2}}\left(\frac{2 \varepsilon}{p^2}\right)^\theta\|f\|_{L_p\left(\frac{2 \varepsilon}{p^2}\right)(\Omega)}  \\
& \lesssim \sup _{0<\varepsilon<\frac{p}{2}}\left(\frac{2 \varepsilon}{p^2}\right)^\theta\|f\|_{L_{p-\varepsilon}(\Omega)} \\
& =\left(\frac{2}{p^2}\right)^\theta\|f\|_{L^{p), \theta}(\Omega)}.
\end{aligned}
$$

Conversely, by using \eqref{eq*}, we obtain
 \begin{align*}
 \|f\|_{G L_p^\theta (\Omega)} &=\sup _{0<\varepsilon<\frac{1}{p}} \varepsilon^\theta\|f\|_{L_{p(\varepsilon)(\Omega)}} \\
& =\sup _{0<\varepsilon<p}\left(\frac{\varepsilon}{p^2}\right)^\theta\|f\|_{L_{p\left(\frac{\varepsilon}{p^2}\right)}(\Omega) } \\
& \gtrsim \sup _{0<\varepsilon<p}\left(\frac{\varepsilon}{p^2}\right)^\theta\|f\|_{L_{p-\varepsilon}(\Omega)} \\
& =\left(\frac{1}{p^2}\right)^\theta \sup _{0<\varepsilon<p} \varepsilon^\theta\|f\|_{L_{p-\varepsilon}(\Omega)} \\
&=\frac{1}{p^{2 \theta}}\|f\|_{L^{p), \theta}(\Omega)}. \qedhere 
\end{align*}
\end{proof}

From Theorem \ref{dualspacethm} and Lemma \ref{duallem1}, we obtain the following.

\begin{thm}\label{dualthm2} Let $\theta > 0$ and $1<p<\infty$. Then
$$
L^{p), \theta}(\Omega)=\left(L^{p'(\theta}(\Omega)\right)^{\prime}.
$$
\end{thm}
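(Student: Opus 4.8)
The plan is to deduce Theorem \ref{dualthm2} directly from the two preceding results, essentially by identifying each of the three spaces in the statement with a grand Lebesgue space $GL^\bullet_\bullet(\Omega)$ built via the $\varepsilon$-aggrandisation of Definition \ref{def:1}. First I would invoke Lemma \ref{duallem1}: since $\theta>0$, we have $L^{p),\theta}(\Omega)=GL_p^\theta(\Omega)$, with equivalent norms. Next, I would identify the right-hand side. The notation $L^{p'(\theta}(\Omega)$ denotes the small Lebesgue space, which is (by definition, or by the duality results of Fiorenza \cite{Fiorenza2000,Fiorenza2018} recalled in the introduction) the K\"othe predual of $L^{p),\theta}(\Omega)$ — equivalently, one checks that the small Lebesgue space coincides with $GL^{-\theta}_{p'}(\Omega)$, again by applying Lemma \ref{duallem1} in the form $L^{q),\theta}=GL^\theta_q$ together with the known identification of the small space as the associate space; alternatively one takes $L^{p'(\theta}(\Omega):=(L^{p),\theta}(\Omega))'= (GL^\theta_p(\Omega))'$ as the working definition and shows this equals $GL^{-\theta}_{p'}(\Omega)$. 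The cleanest route is: by Theorem \ref{dualspacethm} applied with $p$ replaced by $p'$ and the sign of $\theta$ flipped, $\left(GL^{-\theta}_{p'}(\Omega)\right)'=GL^\theta_p(\Omega)$, so $GL^\theta_p(\Omega)$ is the K\"othe dual of $GL^{-\theta}_{p'}(\Omega)$.

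Concretely, the key steps in order: (i) apply Lemma \ref{duallem1} with exponent $p$ to get $L^{p),\theta}(\Omega)=GL_p^\theta(\Omega)$; (ii) apply Lemma \ref{duallem1} with exponent $p'$ (legitimate since $1<p<\infty$ forces $1<p'<\infty$) to get $L^{p'),\theta}(\Omega)=GL_{p'}^\theta(\Omega)$, and hence identify the small space $L^{p'(\theta}(\Omega)=(L^{p),\theta}(\Omega))'$ with $GL^{-\theta}_{p'}(\Omega)$ — here one uses that the associate of a grand Lebesgue space, in the $\varepsilon$-aggrandised picture, is the correspondingly negatively-aggrandised space, which is exactly the content of Theorem \ref{dualspacethm}; (iii) apply Theorem \ref{dualspacethm} (with the roles $p\leftrightarrow p'$, $\theta\leftrightarrow -\theta$): $\left(GL^{-\theta}_{p'}(\Omega)\right)'=GL^\theta_p(\Omega)$; (iv) chain the equalities: $\left(L^{p'(\theta}(\Omega)\right)'=\left(GL^{-\theta}_{p'}(\Omega)\right)'=GL^\theta_p(\Omega)=L^{p),\theta}(\Omega)$, with the understanding that all identifications are up to equivalence of norms, and that $GL^\theta_p(\Omega)$ is a Banach function space (so that the second associate theorem / the Lorentz–Luxemburg theorem applies and no information is lost in taking $(\cdot)''$).

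The main obstacle I anticipate is purely a matter of reconciling notation and definitions rather than hard analysis: one must pin down precisely what $L^{p'(\theta}(\Omega)$ means in this paper. If it is taken as literally the small Lebesgue space with the integral norm recalled in the introduction, then step (ii) requires citing the classical fact that the small Lebesgue space is the K\"othe predual of the grand Lebesgue space (Fiorenza \cite{Fiorenza2000}), or else reproving the equivalence $L^{p'(\theta}(\Omega)\asymp GL^{-\theta}_{p'}(\Omega)$ by a direct rearrangement computation analogous to Lemma \ref{duallem1}. If instead $L^{p'(\theta}(\Omega)$ is defined ab initio as $(L^{p),\theta}(\Omega))'$, the statement is nearly tautological given Lemma \ref{duallem1} and Theorem \ref{dualspacethm}. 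A secondary technical point is that the associate space operation is only well-behaved (involutive) for Banach function spaces; for $1<p<\infty$ and $\theta>0$ the spaces $GL^\theta_p(\Omega)$ and $GL^{-\theta}_{p'}(\Omega)$ are indeed (equivalent to) Banach function spaces over the finite measure space $\Omega$, so this causes no difficulty, but it should be noted explicitly so that the final equality — an equality of associate spaces, hence automatically with equivalent norms — is justified.
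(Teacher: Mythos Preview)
Your proposal is correct and matches the paper's approach exactly: the paper's entire proof is the single line ``From Theorem \ref{dualspacethm} and Lemma \ref{duallem1}, we obtain the following,'' which is precisely your chain of identifications (i)--(iv). Your caveat about pinning down $L^{(p',\theta}(\Omega)$ is apt --- the paper is equally silent on this point and appears to take the identification $L^{(p',\theta}(\Omega)=GL^{-\theta}_{p'}(\Omega)$ as either implicit in the notation or known from \cite{Fiorenza2000}.
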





\textbf{Author Contributions} Authors have been discussing and working together on the manuscript, contributing
equally to the content, presentation, and review of the manuscript.

\textbf{Funding} This research is funded by the Science Committee of MSHE of Kazakhstan (Grant No. BR21882172). This research was also funded by Nazarbayev University under CRP grant
20122022CRP1601. The research of H.R. is supported by the UPAR grant G00004572, United Arab Emirates University.


\textbf{Data availability} Data sharing is not applicable to this article, as no datasets were generated or analysed
during the current study.

\textbf{Conflict of interest} The authors have no Conflict of interest as defined by Springer, or other interests that
might be perceived to influence the results and/or discussion reported in this paper.

\textbf{Ethical approval} Not applicable.


\begin{thebibliography}{NZW01}





\bibitem{AFH2020} I. Ahmed,  A. Fiorenza,  A. Hafeez,  
\newblock  Some Interpolation Formulae for Grand
and Small Lorentz Spaces,
\newblock {\em  Mediterr. J. Math.}, 17:57, 2020.


\bibitem{bennett1988interpolation}
C. Bennett and R.C. Sharpley, \emph{Interpolation of Operators}, Elsevier Science, 1988. 


\bibitem{Berg}  J. Bergh,  J. L\"ofstr\"om. Interpolation Spaces. An Introduction. Springer, Berlin, 1976.


\bibitem{Castillo2021}
R. E. Castillo and H. C. Chaparro, \emph{Classical and Multidimensional Lorentz Spaces}, De Gruyter, 2021. 



\bibitem{Fiorenza2000}
A.~Fiorenza, ``Duality and reflexivity in grand Lebesgue spaces,'' \emph{Collectanea Mathematica}, vol. 51, no. 2, pp. 131--148, 2000. 


\bibitem{Fiorenza1998}
A. Fiorenza and C. Sbordone, “Existence and uniqueness results for solutions of nonlinear equations with right hand side in \(L^1\),” \emph{Stud. Math.}, vol. 127, no. 3, pp. 223--231, 1998.

\bibitem{Fiorenza2004}
A.~Fiorenza and G.~E. Karadzhov, ``Grand and Small Lebesgue Spaces and Their Analogs,'' \emph{Zeitschrift f\"{u}r Analysis und ihre Anwendungen}, vol. 23, no. 4, pp. 657--681, Dec. 2004. 
\bibitem{Fiorenza2018}
A.~Fiorenza, M.~R. Formica, A.~Gogatishvili, T.~Kopaliani, and J.~M. Rakotoson, ``Characterization of interpolation between Grand, small or classical Lebesgue spaces,'' \emph{Nonlinear Analysis}, vol. 177, pp. 422--453, Dec. 2018. 




\bibitem{Greco1997}
L. Greco, T. Iwaniec, and C. Sbordone, “Inverting the \(p\)-harmonic operator,” \emph{Manuscr. Math.}, vol. 92, no. 2, pp. 249--258, 1997.

\bibitem{GOT2005} A. Gogatishvili, B. Opic, W. Trebels, 
\newblock Limiting reiteration for real interpolation with slowly varying functions,
\newblock {\em  Math. Nachr.}, 278: 86--107, 2005.

\bibitem{Iwaniec1992}
T.~Iwaniec and C.~Sbordone, ``On the integrability of the Jacobian under minimal hypotheses,'' \emph{Archive for Rational Mechanics and Analysis}, vol.~119, no.~2, pp. 129--143, 1992.

\bibitem{Iwaniec1994} 
T. Iwaniec and C. Sbordone, “Weak minima of variational integrals,” \emph{J. Reine Angew. Math.} 454 (1994), pp. 143–161.

\bibitem{Iwaniec1998} 
T. Iwaniec and C. Sbordone, “Riesz transforms and elliptic PDEs with VMO coefficients,” \emph{J. Anal. Math.} 74 (1998), pp. 183–212.

\bibitem{KM2020} V. Kokilashvili, A. Meskhi, 
\newblock Extrapolation in weighted classical and grand Lorentz spaces. application to the boundedness of integral operators,
\newblock {\em  Banach J. Math. Anal.}, 14: 1111--1142, 2020.

\bibitem{KM2023} V. Kokilashvili, A. Meskhi, 
\newblock Rubio de Francía’s weighted extrapolation in mixed-norm spaces and applications,
\newblock {\em  Math. Nachr.}, 296: 3929--3947, 2023.

\bibitem{KMRS2016}   V. Kokilashvili, A. Meskhi, H. Rafeiro, and  S. Samko. Integral Operators in Non-Standard Function Spaces, Vol. II. Variable Exponent Hölder, Morrey-Campanato and Grand Spaces, Birkhäuser, Basel, 2016.

\bibitem{KMRS2024}    V. Kokilashvili, A. Meskhi, H. Rafeiro, and  S. Samko. Integral Operators in Non-Standard Function Spaces, Vol. III. Advances in Grand Function Spaces, Birkhäuser, Basel, 2024.

\bibitem{Meskhi1}
A. Meskhi, “Maximal functions and singular integrals in Morrey spaces associated with grand Lebesgue spaces,” \emph{Proc. A. Razmadze Math. Inst.}, vol. 151, pp. 139–143, 2009.

\bibitem{Meskhi2}
A. Meskhi, “Maximal functions, potentials and singular integrals in grand Morrey spaces,” \emph{Complex Var. Elliptic Equ.}, vol. 56, no. 10-11, pp. 1003–1019, 2011.

\bibitem{NurSur}
E. D. Nursultanov, D. Suragan, “On the convolution operator in Morrey spaces,” \emph{J. Math. Anal. Appl.}, vol.  515, 126357, 2022.

\bibitem{rafeiro1}
H. Rafeiro, “A note on boundedness of operators in grand Morrey spaces,” in Advances in Harmonic Analysis and Operator Theory, vol. 229, Oper. Theory Adv. Appl., Birkhäuser, Basel, pp. 349–356, 2013.

\bibitem{Sbordone1996}
C. Sbordone, “Grand Sobolev spaces and their applications to variational problems,” \emph{Matematiche (Catania)} 51.2 (1996), pp. 335–347 (1997).


\bibitem{Sbordone1998}
C. Sbordone, “Nonlinear elliptic equations with right hand side in nonstandard spaces,” \emph{Atti Semin. Mat. Fis. Univ. Modena}, vol. 46, pp. 361--368, 1998.


\end{thebibliography}
\end{document}